\documentclass{article}
\usepackage{fullpage,amsfonts,amsmath,hyperref,bm}
\usepackage{graphicx}
\newtheorem{theorem}{Theorem}

\newtheorem{lemma}[theorem]{Lemma}

\newenvironment{proof}[1][Proof]{\medskip\goodbreak\noindent\textbf{#1.} }{\ \rule{0.5em}{0.5em}}
\title{Separatrix splitting at  a Hamiltonian $0^2 i\omega$ bifurcation}
\author{V.Gelfreich, L.Lerman}
\author{Vassili Gelfreich${}^1$ and Lev Lerman${}^2$\\[6pt]
\small
${}^1$Mathematics Institute, University of Warwick, UK\\
\small
v.gelfreich@warwick.ac.uk\\[6pt]
\small
${}^2$Lobachevsky State University of Nizhni Novgorod, Russia\\\small  lermanl@mm.unn.ru
}
\date{9 September 2014}

\begin{document}
\maketitle
\begin{abstract}
We discuss the splitting of a separatrix in a generic unfolding of a degenerate equilibrium in a Hamiltonian system with two degrees of freedom.
We assume that the unperturbed fixed point has two purely imaginary eigenvalues and a double zero one. It is well known that an one-parametric
unfolding of the corresponding Hamiltonian can be described by an integrable normal form. The normal form has a normally elliptic invariant manifold
of dimension two. On this manifold, the truncated normal form has a separatrix loop. This loop shrinks to a point when the unfolding parameter vanishes.
Unlike the normal form, in the original system the stable and unstable trajectories of the equilibrium do not coincide in general. The splitting of
this loop is exponentially small compared to the small parameter. This phenomenon implies
non-existence of single-round homoclinic orbits and divergence of series in the normal form theory.
We derive an asymptotic expression for the separatrix splitting.
We also discuss  relations with behaviour of analytic continuation of the system in a complex neighbourhood of the  equilibrium.
\end{abstract}

\section{Set up of the problem}

Normal form theory provides a powerful tool for studying local dynamics near equilibria.
The normal form theory uses coordinate changes in order to represent equations in
the simplest possible form. The normal form often possesses additional symmetries which are not present in the original
system. For a Hamiltonian system a continuous family of symmetries implies existence of
an  additional integral of motion due to Noether theorem. In the case of two degrees of freedom,
an additional integral of motion makes the dynamics integrable. In this way the normal form
theory looses information on  non-integrable chaotic dynamics possibly present in the original system.

The accuracy of the normal
form theory depends on the smoothness of the original vector field and, in the analytic theory,
the error becomes smaller than any order of a small parameter
and in some cases an exponentially small upper bound can be established.

In this paper we illustrate this situation considering a classical generic bifurcation of an equilibrium in an one
parameter analytic family of Hamiltonian systems with two degrees of freedom
(see e.g.~\cite{BroerCKV1993}).
Let us describe our set up in more details.
Let $H_\mu=H_\mu(x_1,x_2,y_1,y_2)$
be a real-analytic family of Hamiltonian functions
defined in a neighbourhood of the origin in $\mathbb R^4$ endowed with the canonical symplectic form
$
\Omega=dx_1\wedge dy_1+dx_2\wedge dy_2
$.
The dynamics are defined via the canonical system
of Hamiltonian differential equations
$$
\dot x_k=\frac{\partial H_\mu}{\partial y_k}
\quad\mbox{and}\quad
\dot y_k=-\frac{\partial H_\mu}{\partial x_k}
$$
where $k\in\{1,2\}$. The corresponding flow preserves the Hamiltonian $H_\mu$
and symplectic form $\Omega$. It is convenient to write down the Hamiltonian equations in the vector form
\begin{equation}\label{Eq:mainHamEq}
\dot {\bm x}=\mathrm{J}H_\mu'({\bm x})
\end{equation}
where $\mathrm J$ is the standard symplectic matrix and $\bm x=(x_1,x_2,y_1,y_2)$.

We assume that for $\mu=0$ the origin is an equilibrium of the Hamiltonian system
with a pair of purely imaginary eigenvalues $\pm i\omega_0$ and a double zero one $\lambda_0=0$.
More precisely, we assume that $H_0'(0)=0$ and the Hessian matrix $H''_0(0)$ is
already transformed to the diagonal form $H_0''(0)=\mathrm{diag}(0,1,\omega_0,\omega_0)$,
which can be achieved by a linear canonical transformation provided
${\mathrm J}H_0''(0)$ is not semi-simple.
Then for any integer $n$ there is an analytic canonical change
of variables which transforms the Hamiltonian
to the following form
\begin{equation}\label{Eq:mainH}
H_\mu=\frac{y_1^2}2+V_\mu(x_1,I)+R_\mu(x_1,y_1,x_2,y_2)
\end{equation}
where $I=\frac{x_2^2+y_2^2}{2}$, $V_\mu(x_1,I)$ is polynomial in $x_1,I$ and $\mu$,
 and the remainder term $R_\mu$ has a Taylor expansion which starts with terms
 of order $n$, i.e., $R_\mu=O(\|\bm x\|^n+\mu^n)$. In the analytic case, this remainder can   be
made even exponentially small \cite{Iooss2004,Iooss2005,HaragusIooss2011}.
Our assumptions imply that
$$
\frac{\partial V_0}{\partial x_1}(0,0)=\frac{\partial^2 V_0}{\partial x_1^2}(0,0)=0.
$$
Then the lower order terms of $V_\mu$ have the form
\begin{equation}\label{Eq:potential}
V_\mu(x_1,I)=\omega_0I-a\mu x_1+b\frac{x_1^3}{3}+c x_1 I+\dots
\end{equation}
where we have explicitly written down  all quadratic and some of the cubic terms.
In a generic family $a,b,\omega_0\ne0$.
Without loosing in generality and for greater convenience we
assume $\omega_0,a,b>0$.

Since the remainder term in (\ref{Eq:mainH}) is small, it is natural to
make a comparison with the dynamics of the normal form
described by the truncated Hamiltonian function
\begin{equation}\label{Eq:HamNF}
\hat H_\mu=\frac{y_1^2}2+V_\mu(x_1,I).
\end{equation}
Obviously, the Poisson bracket $\{\hat H_\mu,I\}=0$ and consequently $I$
is an integral of motion for the normal form. For every fixed $I$ 
equation (\ref{Eq:HamNF}) represents 
a natural Hamiltonian system in a neighbourhood of the origin on the $(x_1,y_1)$-plane.
 Depending on the values of $\mu$ and $I$,  the potential $V_\mu$ takes one of three shapes shown on Figure~\ref{Fig:Vmu}.
\begin{figure}
\begin{center}
\includegraphics[width=0.30\textwidth]{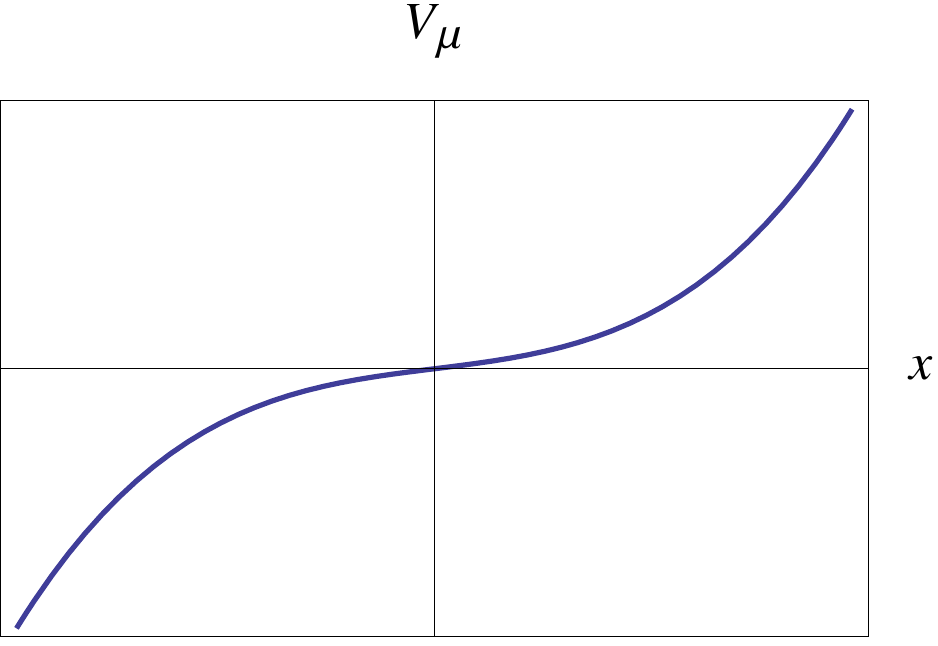}
~
\includegraphics[width=0.30\textwidth]{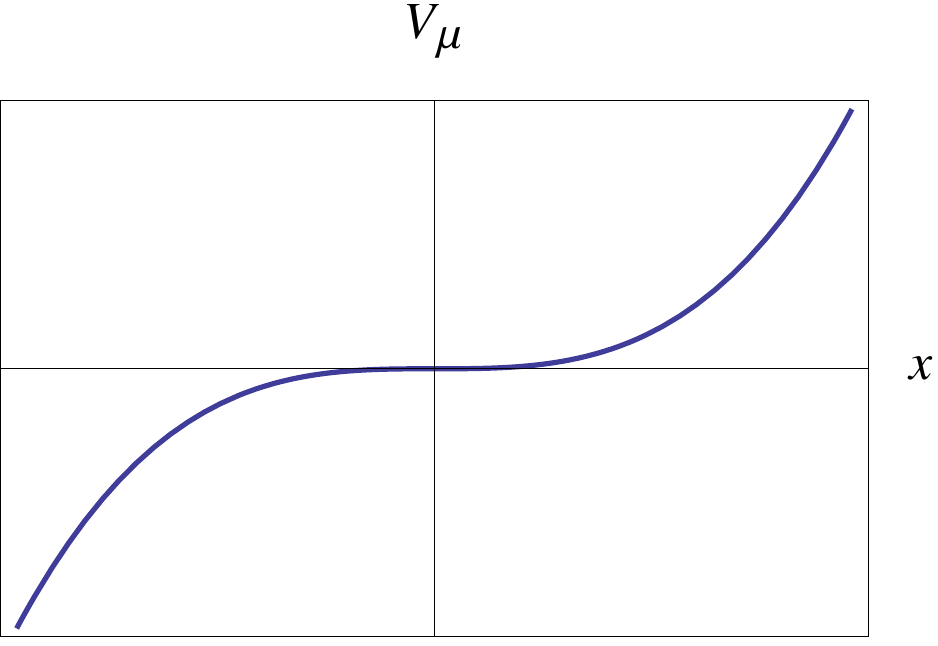}
~
\includegraphics[width=0.30\textwidth]{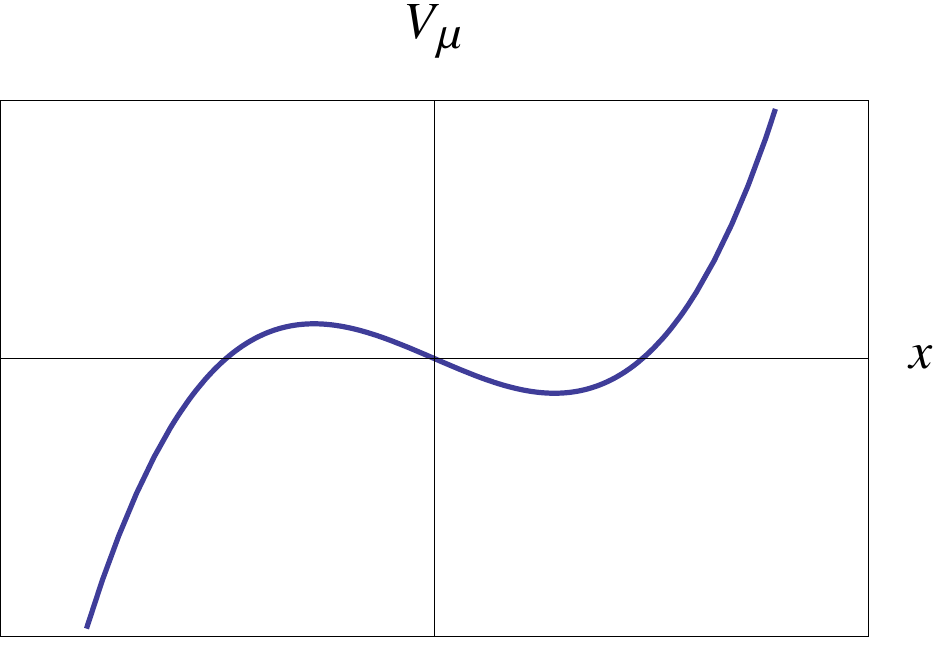}\\%
(a)\kern4.9cm (b)\kern4.9cm (c)~~
\end{center}
\caption{Potential of the truncated normal form for fixed values of $\mu$ and  $I$}
\label{Fig:Vmu}
\end{figure}
Respectively, the equation $\frac{\partial V_\mu}{\partial x_1}=0$ has either none, one
or two solutions located in a small neighbourhood of the origin.
These solutions correspond either to a periodic orbit (if $I>0$)
or to an equilibrium (if $I=0$) of the normal form.
We note that for $I=0$ Figure~\ref{Fig:Vmu} (a), (b) and (c) correspond respectively
to $\mu<0$, $\mu=0$ and $\mu>0$.
When the potential has the shape of Figure~\ref{Fig:Vmu}~(c) the corresponding Hamiltonian
system has a separatrix loop similar to the one shown on Figure~\ref{Fig:loop}.
\begin{figure}
\begin{center}
\includegraphics[width=4cm,height=2cm]{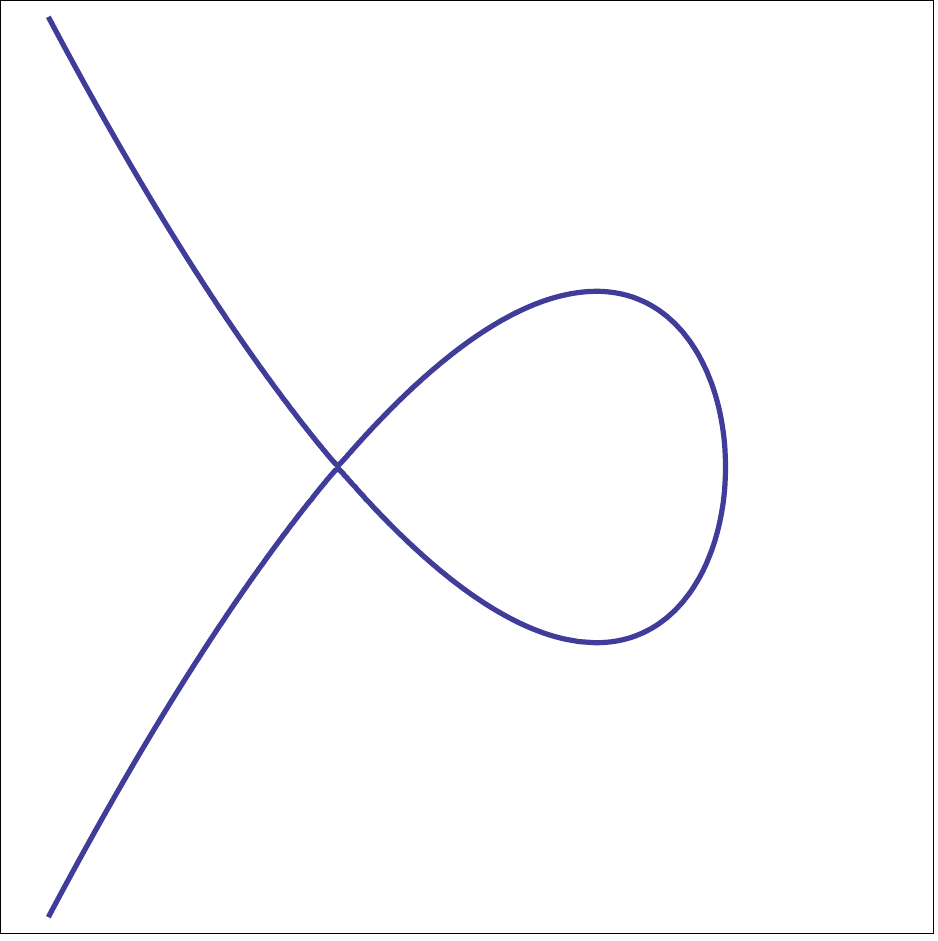}
\end{center}
\caption{Projection of the separatrix loop  on the $(x_1,y_1)$-plane}
\label{Fig:loop}
\end{figure}
This  separatrix loop  looks similar to the separatrix  of the equation
defined by the Hamiltonian
$$
\frac{y^2}{2}-a\mu x+b\frac{x^3}3.
$$
The situation can be summarised in the following way.
The plane $I=0$ is invariant for the normal form Hamiltonian $\hat{H}_\mu$. The restriction of the normal form Hamiltonian
onto this plane defines a Hamiltonian system with one degree of freedom.
As $\mu$ crosses the zero, a pair of equilibria is created on this plane, one saddle point
and one elliptic one. The normal form system has a separatrix solution which converges
to the saddle equilibrium both at $t\to+\infty$ and $t\to-\infty$. Trajectories located
inside this separatrix loop are periodic. All other trajectories escape
from a small neighbourhood of the origin and their behaviour cannot be studied
using only the local normal form theory presented here.

The remainder term in the Hamiltonian (\ref{Eq:mainH}) breaks the symmetry of the normal form
and it is expected that in general the full equations do not possess neither an
additional integral nor an invariant plane  \cite{GL2002,LermanG2005}.
Nevertheless, a part of the normal form dynamics
 survives. In particular, for $\mu>0$ the Hamiltonian system $H_\mu$ has a
 saddle-centre equilibrium
$\bm p_\mu$ with eigenvalues $\pm\lambda_\mu$ and $\pm i\omega_\mu$,
where $\omega_\mu$ is close to $\omega_0$ and $\lambda_\mu$ is of order of $\mu^{1/4}$.
There are 4 solutions (separatrices) of the Hamiltonian system which are asymptotic to this equilibrium.
These solutions converge to $\bm p_\mu$ as $t\to+\infty$ or $t\to-\infty$ and are
tangent asymptotically as $t\to \mp\infty$ to eigenvectors of $\mathrm JH_\mu''(\bm p_\mu)$, which correspond
to the eigenvalues $\pm\lambda_\mu$ respectively.

Two of these separatrices are close to the separatrix loop of the normal form.
The main objective of this paper is to study the difference between these separatrices.
Our main theorem implies that the unstable solutions returns to a small neighbourhood of $\bm p_\mu$ but,
in general, misses the stable direction by a quantity which is exponentially small compared to $\mu$.
Consequently, the system (\ref{Eq:mainH}) generically does not have a single-round homoclinic orbit
for all sufficiently small $\mu>0$.
We also point out that our Main theorem implies existence of homoclinic
trajectories for Lyapunov periodic orbits located on the central manifold
exponentially close to the saddle-centre equilibrium (compare with similar statements
for reversible systems in  \cite{Lombardi}
and with the recent preprint \cite{JNL2014}).

The dimension arguments \cite{KoltsovaL1995,Champneys2001}
show that the existence of a single-round homoclinic orbit
to a saddle-centre equilibrium of a vector field in $\mathbb R^4$
is expected to be a phenomenon of co-dimension between one and three
depending on the presence (or absence) of Hamiltonian structure and
reversible symmetries.  In particular, the codimension one corresponds to a symmetric homoclinic orbit for a symmetric equilibrium in
a reversible Hamiltonian system, and the codimension three corresponds to a non-symmetric homoclinic orbit
to a symmetric equilibrium in a reversible non-Hamiltonian system.
Treating $\lambda$ and $\omega$ as independent parameters,
Champneys  \cite{Champneys2001}  provided an example of 
 a reversible vector field where lines of homoclinic points bifurcate from $\lambda=0$ on the plane of~$(\lambda,\omega)$.

The splitting of the one-round separatrix loop does not prevent existence of ``multi-round" homoclinics,
i.e., homoclinic orbits which make several rounds close to the separatrix of the normal form
before converging to the equilibrium.
Generically, if the system is both Hamiltonian and reversible,
we expect the existence of reversible
multi-round homoclinics for a sequence of values of the parameter $\mu$
which converges to $0$.
The study of these  phenomena is beyond the goals of
this paper.

Since in the limit $\mu\to+0$ the separatrix loop disappears and the ratio $\omega_\mu/\lambda_\mu\to +\infty$,
the problem of the separatrix splitting near the bifurcation
can be attributed to
the class of singularly perturbed systems characterised by the presence of
two different time-scales, similar to the problems considered in
\cite{GL2002,GL2003,LermanG2005}.
The difficulty of a singularly perturbed problem is related to the
exponential smallness of the separatrix splitting in the parameter $\mu$
which requires  development of specially adapted perturbation methods
(see for example \cite{Treschev98,Lombardi,G2000} and references in the review \cite{GL01}).

The difficulties related to the exponential smallness do not appear in
problems of the regular perturbation theory.
At the same time dynamics of such systems share many
qualitative properties with the singularly perturbed case.

If a reversible Hamiltonian system has a symmetric
separatrix loop associated with a symmetric saddle-center
equilibrium, then its one-parameter reversible Hamiltonian
unfolding has multi-round homoclinic orbits for a set of
parameter values which accumulate at the critical one~\cite{Mielke,Ragazzo}.

A generic two parameter unfolding of a Hamiltonian system which has a homoclinic
orbit to a saddle-centre equilibrium was  studies in \cite{Koltsova},
where  countable sets of parameter values for
which 2-round (and multi-round) loops are found. 

The splitting of the separatrix loop has important consequences for the dynamics.
The problem of constructing a complete description of the dynamics in
a neighbourhood of a homoclinic loop to a saddle-centre was
stated and partially solved in \cite{Lerman1987}.
Later this result  was extended and improved 
in 
\cite{KoltsovaL1995,Ragazzo,Mielke}.

These papers do not directly cover the situation described in this paper
(see \cite{LermanG2005} for a discussion of  relations between these two classes
in the Hamiltonian context). The main difference is related to the exponential smallness
of the separatrix splitting in the bifurcation problem discussed in the present paper.
The presence of exponentially small phenomena hidden beyond all orders
of the normal form theory is also observed in other bifurcation problems
(see for example \cite{Lombardi,Gel00,Gel02,BaldomaSeara}).

Finally, we note that the problem of existence of small amplitude single- and multi-round homoclinic orbits
arises in various applications. These applications include dynamics of the
three-body-problem near $L_2$ libration point \cite{Simo}.
Homoclinic solutions also appear in the study of traveling wave or steady-state reductions of partial differential equations
on the real line which model various phenomena in mechanics, fluids and optics
(for more details see \cite{Champneys1998,Champneys2001}). These solutions are of particular interest as they represent localized modes or solitary waves,
these problems are often of the singular perturbed nature~\cite{AEK}.

\section{Symplectic approach to measuring the separatrix splitting}

Let $\bm p_\mu$ be an equilibrium  of the Hamiltonian system with
eigenvalues $(\pm\lambda_\mu,\pm i\omega_\mu)$. Then for each $\mu\in(0,\mu_0)$
(where $\mu_0$ is a positive constant) there is an analytic change of variables such that
the equilibrium is shifted to the origin and the Hamiltonian function is transformed to its 
Birkhoff normal form which can be presented in the form 
\begin{equation}\label{Eq:hamNFl}
H_\mu=h_\mu(E_h,E_e)
\end{equation}
where $h_\mu$ is an analytic function of two variables
\begin{equation}\label{Eq:EeEh}
E_h=x_1y_1
\qquad\mbox{and}\qquad
E_{e}=\frac{x_2^2+y_2^2}2.
\end{equation}
A statement equivalent to the convergence of the normal form
was originally obtained in \cite{Moser1958}.
Of course, since $\lambda_\mu\to0$ as $\mu\to+0$,  the size of domains of convergence shrinks to zero both for the normal form
and for the normalising transformation. Nevertheless, it is possible to refine
the estimates of  \cite{Giorgilli2001} in order to establish that the sizes of the domains are
sufficiently large to be used in the following arguments.

Obviously
$\{E_h,h_\mu\}=\{E_e,h_\mu\}=0$ and, consequently, both
$E_h$ and $E_e$ are constant along trajectories of the Hamiltonian system.
Both functions are local integrals only and in general do not have a single-valued
extension onto the phase space. The transformation which transforms the original Hamiltonian to
the normal form is not unique. Nevertheless the values of $E_h$ and $E_u$ are unique
as they do not depend on this freedom.

As the eigenvalues of the equilibrium are preserved, the Taylor expansion of the transformed
Hamiltonian has the form
\begin{equation}\label{Eq:quadr}
h_\mu(E_h,E_e)=H_\mu(\bm p_\mu)+\lambda_\mu E_h+\omega_\mu E_e+O(E_e^2+E_h^2).
\end{equation}
The structure of the phase space in a neighbourhood of the origin is illustrated
by Figure~\ref{Fig:sc} where $H=H_\mu-H_\mu(\bm p_\mu)$.
In particular, in the normal form coordinates points with $x_1=y_1=0$ 
correspond to Lyapunoff periodic orbits.
\begin{figure}
\begin{center}
{\includegraphics[trim= 5cm 5cm 10cm 7cm,angle=-90,width=0.8\textwidth]{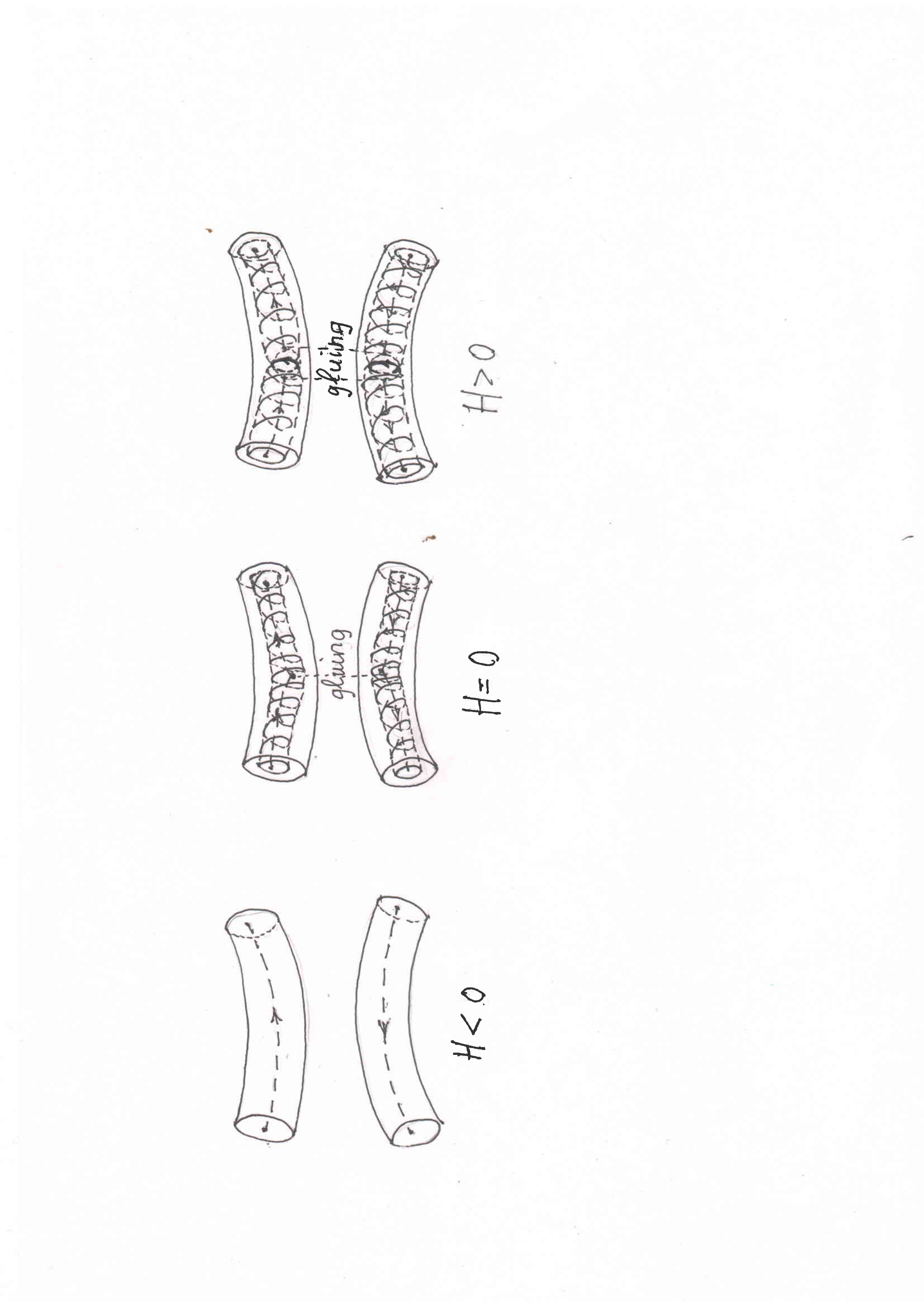}}
\end{center}
\caption{Structure of the phase space in a neighbourhood of a saddle-centre}
\label{Fig:sc}
\end{figure}
A  trajectory which converges to $\bm p_\mu$ as $t\to-\infty$ or $t\to\infty$ without leaving the domain of the normal form
 has $E_e=E_h=0$. In the normal form coordinates all these trajectories are easy to find explicitly.

Let $\bm x_\mu^\pm(t)$ be separatrix solutions of the Hamiltonian system
(\ref{Eq:mainH}) which converge to the equilibrium
\begin{equation}\label{Eq:hameq}
 \lim_{t\to-\infty} x_\mu^{-}(t)=\bm p_\mu
\qquad\mbox{and}\qquad
 \lim_{t\to+\infty} x_\mu^{+}(t)=\bm p_\mu
\end{equation}
being close to the separatrix loop described in the introduction.
Since $\bm x_\mu^\pm$ are solutions of an autonomous ODE,
these assumptions define the functions $\bm x_\mu^\pm(t)$
up to a translation in time $t$. We will eliminate this freedom later.
At the moment it is sufficient to note that
$\bm x_\mu^\pm(0)$ will be chosen to be in a small neighbourhood of the intersection of the normal
form separatrix with the plane
$\Sigma=\{\,y_1=0\,\}$. Note that the curve $\bm x_\mu^\pm$
may have more than one intersection with $\Sigma$, in this case we chose
a ``primary" one.

The unstable separatrix $\bm x_\mu^-(t)$  leaves the domain of the normal form, makes a round trip
near the ghost separatrix loop, and at a later moment of time comes back
close to the stable direction of the Hamiltonian vector field at $\bm p_\mu$.
Let $E_e^1$ and $E_h^1$ be the values of the elliptic and hyperbolic energies obtained after this round-trip.
Conservation of the energy implies that $h_\mu(E_h^1,E_e^1)=h_\mu(0,0)$, so the values of $E_e^1$ and $E_h^1$
are not independent. Traditionally the elliptic energy $E_e^1$ is used to measure the separatrix
splitting. In particular, if $E^1_e=0$, then the trajectory is homoclinic. If $E^1_e\ne0$, the trajectory
will eventually leave the neighbourhood of $\bm p_\mu$ for the second time.

\begin{theorem}[Main theorem]
There is a sequence of real constants $(a_k)_{k\ge0}$ such that
\begin{equation}\label{Eq:mainasymp}
E_e^1\asymp e^{-2\pi\omega_\mu/\lambda_\mu}\left(a_0+\sum_{k\ge2}a_k\lambda_\mu^{2k}\right)\,.
\end{equation}
The coefficient $a_0=|b_0|^2/2$, where $b_0$ is a complex constant defined by the Hamiltonian $H_0$ via equation\/~{\rm (\ref{Eq:a0})}
and $a_n$ are defined by\/~{\rm (\ref{Eq:an})}.
\end{theorem}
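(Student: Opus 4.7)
The plan is to reduce the problem to a regular perturbation by a suitable rescaling and then to extract the exponentially small splitting from an inner equation near the complex singularities of the frozen separatrix. The leading cubic model $\frac{y_1^2}{2}-a\mu x_1+\frac{b}{3}x_1^3$ has separatrix amplitude of order $\mu^{1/2}$ and time scale of order $\mu^{-1/4}$, with $\lambda_\mu$ of order $\mu^{1/4}$; I therefore substitute $x_1=\lambda_\mu^2 X$, $y_1=\lambda_\mu^3 Y$, $\tau=\lambda_\mu t$ and leave $(x_2,y_2)$ unscaled. In these coordinates the $(X,Y)$-dynamics becomes $\lambda_\mu$-independent to leading order, the explicit separatrix $(X_0(\tau),Y_0(\tau))$ is analytic in the strip $|\Im\tau|<\pi$ with pole-type singularities at $\tau=\pm i\pi$, the coupling with $(x_2,y_2)$ enters at order $\lambda_\mu^2$ and higher, and the transverse frequency becomes $\omega_\mu/\lambda_\mu\to\infty$. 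The problem thus takes the familiar shape of a Hamiltonian separatrix splitting under a rapid periodic perturbation.

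Next, by a standard contraction argument on the stable and unstable manifolds of $\bm p_\mu$, I would continue $\bm x_\mu^-(\tau)$ analytically from the real axis into the strip $|\Im\tau|\le \pi-c\lambda_\mu^\alpha$ while $\Re\tau$ stays on the left of an overlap interval around the section $\Sigma$, and I would treat $\bm x_\mu^+$ symmetrically. In this common domain both solutions stay close to the outer separatrix and both satisfy $E_h=E_e=0$ in the Birkhoff chart around $\bm p_\mu$. The analytic continuation of $I=(x_2^2+y_2^2)/2$, which coincides with $E_e$ up to higher order corrections near $\bm p_\mu$, is therefore constant on each solution; the jump of $I$ between the two solutions at $\Sigma$ gives $E_e^1$ to leading order, while the energy conservation $h_\mu(E_h^1,E_e^1)=h_\mu(0,0)$ together with (\ref{Eq:quadr}) links $E_h^1$ to $E_e^1$.

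The main analytic obstacle is the inner equation at the upper singularity. Zooming by $\tau=i\pi+\lambda_\mu z$ and sending $\lambda_\mu\to 0$ produces a $\mu$-independent Hamiltonian system with a saddle-centre at $\Re z\to-\infty$ whose transverse oscillator has frequency $\omega_0$. The stable and unstable manifolds of this limit problem split, and the projection of the splitting on the $E_e$-direction defines the complex constant $b_0$ appearing in (\ref{Eq:a0}); the conjugate singularity at $\tau=-i\pi$ yields $\overline{b_0}$. Constructing these inner solutions rigorously on the universal infinite strip, proving existence and uniqueness with the required decay at $\Re z\to-\infty$, and checking that $b_0$ is independent of the matching level is the hardest step, and I would treat it along the lines of \cite{Lombardi,Gel00,Gel02,BaldomaSeara}.

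Finally I would match. The outer--inner matching in the overlap region extracts from each singularity the Fourier harmonic oscillating at the fast frequency $\omega_\mu/\lambda_\mu$; summing the contributions from $\tau=\pm i\pi$, the complex-conjugate pair combines into the real positive leading term $a_0=|b_0|^2/2$ multiplied by $e^{-2\pi\omega_\mu/\lambda_\mu}$. The subleading coefficients $a_k$ arise from higher order terms of the inner expansion together with $\lambda_\mu^2$-corrections of the outer solutions, and the fact that only even powers $\lambda_\mu^{2k}$ appear (and that $a_1$ vanishes) reflects the Hamiltonian symmetry which pairs the two complex-conjugate singularities and cancels the odd-order contributions. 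This yields (\ref{Eq:mainasymp}) and completes the argument.
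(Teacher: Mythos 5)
Your overall architecture (rescaling by powers of $\lambda_\mu$, analytic continuation of $\bm x_\mu^\pm$ towards the complex singularities of the slow-time separatrix at $s=\pm i\pi$, an inner equation at the singularity defining a Stokes constant $b_0$, and outer--inner matching) is the same as the paper's. However, two of your steps contain genuine errors. First, the quantity you propose to track --- the jump of $I=(x_2^2+y_2^2)/2$ between the two solutions at $\Sigma$ --- is not an adequate measure of the splitting, and the claim that the continuation of $I$ ``is constant on each solution'' is false: $I$ Poisson-commutes with the normal form but not with $H_\mu$, so it drifts along each separatrix by amounts controlled by the remainder $R_\mu$, which are only polynomially small; moreover, if $E_e$ were genuinely constant along $\bm x_\mu^-$ over the whole loop there would be nothing to prove, since $E_e^1$ is by definition the nonzero value it acquires on return. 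The paper's key device, which your plan omits, is the function $\theta_1(t)=\frac1{2i}\Omega(\bm\delta_\mu(t),\bm\xi_2(t))$ of (\ref{Eq:theta1}), pairing the splitting vector with a solution of the \emph{variational equation} along the separatrix fixed by its asymptotics as $t\to-\infty$. Because both factors satisfy that linear equation (the splitting vector up to quadratic errors), $\dot\theta_1=O(\|\bm\delta_\mu\|^2)$, so $\theta_1$ can be transported from the matching region near the singularity down to the real axis and into the Birkhoff chart, where it reduces to the elliptic energy via $E_e^1=2|\theta_1|^2+O(\|\bm\delta_\mu\|^3)$ as in (\ref{Eq:EellipGen}). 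Without a quantity conserved to quadratic order you cannot legitimately identify what you compute at $\Sigma$ (or near $s=i\pi$) with $E_e^1$, which lives at the return to the equilibrium.

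Second, your mechanism for the leading term is wrong. \emph{Summing} the contributions of the two conjugate singularities $\tau=\pm i\pi$ yields a quantity of the form $2\,\Re\bigl(b_0e^{i\phi}\bigr)e^{-\pi\omega_\mu/\lambda_\mu}$ --- first power of the exponential and linear in $b_0$ --- which is the right shape for a splitting distance or a Melnikov function, but cannot produce $|b_0|^2e^{-2\pi\omega_\mu/\lambda_\mu}$. The doubled exponent and the modulus squared arise because $E_e^1$ is \emph{quadratic} in the elliptic component of the splitting: a single (upper) singularity gives $\theta_1\sim\frac{1}{2i}e^{-\pi\omega_\mu/\lambda_\mu}\sum_n b_n\lambda_\mu^{2n}$, and then $E_e^1=2|\theta_1|^2$ yields (\ref{Eq:an}) and (\ref{Eq:mainasymp}); the lower singularity enters only through the conjugation $\theta_2(t)=\overline{\theta_1(\bar t)}$ hidden in taking $|\theta_1|^2$. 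Relatedly, the absence of the $k=1$ term is not a conjugate-pair cancellation: it follows from $\mu=O(\lambda_\mu^4)$, which forces $H_1\equiv 0$ and $\hat A_1=\hat B_1\equiv0$, hence $\bm\delta_1\equiv0$ and $b_1=0$ in the inner expansion. A smaller inaccuracy: the limiting inner problem is the Hamiltonian $H_0$ at the bifurcation value, whose equilibrium is the degenerate $0^2i\omega$ point with algebraically decaying separatrices $\sim\tau^{-2}$, not a saddle-centre.
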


We note that $\lambda_\mu\sim\mu^{1/4}$ and $\omega_\mu=\omega_0+O(\mu^{1/2})$. Then
the asymptotic expansion  (\ref{Eq:mainasymp}) implies that $E_e^1$ is exponentially small compared
to $\mu$.
Moreover, if $a_0\ne0$ this theorem implies the splitting of the separatrix and, hence, non-existence of a single-loop
homoclinic orbit. We do not know an explicit formula to compute $b_0$.
Nevertheless, numerical methods of \cite{GS2008} can be adapted for evaluating the constants
in the asymptotic series with arbitrary precision. The arguments presented in section~\ref{Se:melnikov}
can be used to prove that
$b_0$ is generically non-vanishing (as the map $H_0\mapsto b_0$  is a non-trivial analytic (non-linear) functional).
Indeed, if the Hamiltonian $H_\mu$ analytically depends on an additional parameter $\nu$, then
it can be proved that $b_0=b_0(\nu)$ is analytic.
Then the Melnikov method can be used to
show that $b_0'\ne0$ for values of $\nu$ which correspond to an integrable Hamiltonian.
Finally the analyticity implies that zeroes of $b_0(\nu)$ are isolated and, consequently,
the coefficient does not vanish for a generic Hamiltonian $H_0$.

The proof of the main theorem is based on ideas proposed by V. Lazutkin
in 1984 for  studying  separatrix splitting for the standard map and later
used in  \cite{G2000} for studying separatrix splitting of a rapidly forced pendulum.
This paper contains a sketch of the proof for the main theorem.

The Melnikov method is often used to study the splitting of separatrices. In general the Melnikov
method does not produce a correct estimate for the problem discussed in this paper.
Section \ref{Se:melnikov} contains a discussion of the applicability of the Melnikov method.

\section{Elliptic energy and the variational equation}

As a first step of the proof we provide a description of the elliptic energy $E_e^1$ in terms
of the splitting vector
\begin{equation}\label{Eq:delta}
\bm\delta_\mu(t)=\bm x_\mu^+(t)-\bm x_\mu^-(t),
\end{equation}
which describes the difference between the stable and unstable separatrix solution,
and a solution of a variational equation around $\bm x_\mu^+(t)$.
This description allows us to compute $E^1_e$ without
 explicit usage of a transformation to the normal form in a neighbourhood of the saddle-centre equilibrium.

In the normal form coordinates the Hamiltonian is described by equation (\ref{Eq:hamNFl}),
thus the corresponding equations of motion take the form
\begin{equation}\label{Eq:nfhameq}
\dot x_1=\tilde\lambda x_1,\quad
\dot y_1=-\tilde\lambda y_1,\quad
\dot x_2=\tilde\omega  x_2,\quad
\dot y_2=-\tilde\omega  y_2,
\end{equation}
where $\tilde\lambda=\partial_1h_\mu(E_h,E_e)$ and
$\tilde\omega=\partial_2h_\mu(E_h,E_e)$.
Since on the local stable trajectory  $E_h=E_u=0$, equation~(\ref{Eq:quadr}) implies that $\tilde\lambda(0,0)=\lambda_\mu$,
and we can find this trajectory explicitly:
\begin{equation}\label{Eq:xplusNF}
\bm x_\mu^+(t)=(0,c_\mu e^{-\lambda_\mu t},0,0)
\end{equation}
where $c_\mu$ is a constant.
Then the  variational equation around this solution takes the form
$$
\dot \xi_1 
=\lambda_\mu \xi_1
\quad
\dot \eta_1 = -\lambda _\mu \eta_1
-\kappa_\mu y_1^2(t)\xi_1 ,
\qquad
\dot \xi_2
=\omega_\mu \eta_2,
\qquad
\dot \eta_2
=-\omega_\mu \xi_2
$$
where $y_1(t)=c_\mu e^{-\lambda_\mu t}$ and $\kappa_\mu=\partial^2_{1,1}h_\mu(0,0) $.
A fundamental system of solutions for the variational equation
is found explicitly:
\begin{equation}\label{Eq:varNF}
\begin{array}{ll}
\bm\xi_1(t)=e^{-i\omega_\mu t}(0,0,1,-i),\quad&
\bm\xi_2(t)=e^{i\omega_\mu t}(0,0,1,i),\\[8pt]
\bm\xi_3(t)=\lambda_\mu c_\mu e^{-\lambda_\mu t}(0,1,0,0),\qquad&
\bm\xi_4(t)=-\lambda_\mu^{-1} c_\mu^{-1} (e^{\lambda_\mu t},c_\mu^2\kappa_\mu te^{-\lambda_\mu  t},0,0).
\end{array}
\end{equation}
Note that we have chosen $\bm\xi_3(t)=\dot{\bm x}_\mu^+(t)$.
The first two solutions are  mutually complex conjugate,
so
 real-analytic solutions can be easily constructed when needed.
A direct computation shows that $\Omega({\bm\xi}_1,{\bm\xi}_2)=2i$ and
$\Omega(\bm\xi_3,\bm\xi_4)=1$. For all other pairs $(j,k)$ the symplectic form
$\Omega(\bm\xi_j,\bm\xi_k)$  vanishes. Later we will study those solutions for
non-real values of $t$. So it is interesting to note that the function $\bm\xi_1(t)$
exponentially grows in the complex upper half-plane $\Im t>0$, while $\bm\xi_2(t)$
exponentially decays there.

For each fixed value of $t$ we can consider the collection of vectors $\bm\xi_k(t)$, $k=1,\dots,4$, as a basis in $\mathbb C^4$.
Then the function
\begin{equation}\label{Eq:ttheta}
\tilde\theta_1(t):=\frac{1}{2i}\Omega(\bm\delta_\mu(t),\bm\xi_2(t))
\end{equation}
provides the $\bm\xi_1$-component of the splitting vector $\bm\delta_\mu(t)$.
Equations (\ref{Eq:varNF}) and (\ref{Eq:xplusNF}) imply that
$$
\tilde\theta_1(t)=\frac{1}{2i}e^{i\omega_\mu t}(ix^-_2(t)-y^-_2(t))\,,
$$
where $x_2^-$ and $y_2^-$ are components of $\bm x^-_\mu$ in the normal
form coordinates (for the values of $t$ corresponding to the first return of the unstable trajectory
to the small neighbourhood of the saddle-centre $\bm p_\mu$). Taking into account that $\bm x^-_\mu$ is real-analytic
and using the definition of $E_e^1$ of (\ref{Eq:EeEh}),
we obtain
that the equality
\begin{equation}\label{Eq:Eellip}
|\tilde\theta_1(t)|^2=\frac{(x_2^-(t))^2+(y_2^-(t))^2}{4}=
\frac{E_e^1}{2}.
\end{equation}
holds for real values of $t$.  Since $E_e^1$ is a local integral,
$|\tilde\theta_1(t)|$ also stays constant for real $t$ while the unstable
solution remains inside the domain of the normal form.

The equation (\ref{Eq:Eellip}) provides a relation between $E_e^1$ and $\tilde\theta_1$.
While $E_e^1$ is defined using the normal form coordinates, the function $\tilde\theta_1$
is defined by (\ref{Eq:ttheta}) and can be evaluated in other canonical systems of coordinates.
This computation relies on accurate analysis of the way the splitting vector $\bm\delta_\mu$
and the solutions of the variational  equation $\bm\xi_k$ are transformed under
coordinate changes.
It is important to note that although canonical coordinate changes do preserve the symplectic
form,  $\Omega(\bm\delta_\mu(t),\bm\xi_2(t))$ does not take the same value when
evaluated in a different coordinate system but can differ by a value of the order of $\|\bm\delta_\mu(t)\|^2$.

Slightly overloading the notation, let $\bm\delta_\mu$ and $\bm\xi_k$ be respectively the splitting vector and the solutions of the variational
equation written in the original coordinates.
The splitting vector is defined by equation (\ref{Eq:delta}).
The solutions $\bm\xi_k$ can be fixed by asymptotic conditions described in the next section
to ensure that they represent  the same solutions of the variational equation as in (\ref{Eq:ttheta}) but
expressed in the other coordinates.
Then we define a function in a way similar to (\ref{Eq:ttheta})
\begin{equation}\label{Eq:theta1}
\theta_1(t)=\frac1{2i}\Omega(\bm \delta_\mu(t),\bm \xi_2(t)).
\end{equation}
It is easy to check that
$\tilde\theta_1(t)=\theta_1(t)+O(C_\mu\|\delta_\mu(t)\|^2)$, where the constant $C_\mu$
bounds the $C^2$-norm of the transformation between the systems of coordinates.
For the real $t$, the function $\bm\xi_2(t)$ is uniformly bounded and we conclude that
$|\theta_1(t)|=O(\|\delta_\mu(t)\|)$. We conclude that $|\tilde\theta_1(t)|^2=|\theta_1(t)|^2+O(C_\mu\|\delta_\mu(t)\|^3)$.
Then  equation  (\ref{Eq:Eellip}) implies that
\begin{equation}\label{Eq:EellipGen}
E_e^1=2|\theta_1(t)|^2+O(C_\mu\|\delta_\mu(t)\|^3)\,.
\end{equation}
A refinement of the arguments from \cite{Giorgilli2001} implies that $C_\mu=O(\mu^{-2})$.
This factor  does not break the approximation as $\theta_1$ is of the same order as $\delta_\mu$ and $\delta_\mu$
is exponentially small compared to $\mu$. We will use the equation (\ref{Eq:EellipGen}) to obtain an estimate
for $E_e^1$.

\section{Variational equation\label{Se:vareq}}
On the next step of the proof we study solutions of the variational equation near the
unstable separatrix solution  $\bm x_\mu^-$:
\begin{equation}\label{Eq:mainvareq}
\dot{\bm \xi}=\left.\mathrm{J}H_\mu''\right|_{\bm x_\mu^-(t)}\bm \xi\,.
\end{equation}
This is a linear homogeneous non-autonomous equation.
Since the variational equation comes from a Hamiltonian system, it is easy to check that
for any two solutions $\bm\xi$ and $\tilde{\bm\xi}$ of equation (\ref{Eq:mainvareq}) the
value of the symplectic form $\Omega(\bm\xi(t),\tilde{\bm\xi}(t))$ is independent of $t$.
This property together with  asymptotic behaviour of solutions at $t\to-\infty$
are  used to select a fundamental system of solutions.

Let $\bm v_\mu\in\mathbb C^4$ be an eigenvector of the linearised Hamiltonian vector field at $\bm p_\mu$,
$$
\left.\mathrm{J}H_\mu''\right|_{\bm p_\mu}\bm v_\mu=i\omega_\mu \bm v_\mu,
$$
such that  $\Omega(\bm v_\mu,\bar{\bm  v}_\mu)=-2i$.
Note that the complex conjugate vector $\bar {\bm v}_\mu$ is also an eigenvector, but it
corresponds to the complex conjugate eigenvalue $-i\omega_\mu$. Let
$\bm v_\mu=\bm v_\mu'+i \bm v_\mu''$ with $\bm v_\mu',\bm v_\mu''$ being vectors with real components.
Then our normalisation condition is equivalent to $\Omega(\bm v_\mu',\bm v_\mu'')=1$.
The vector $\bm v_\mu$ is  defined uniquely up to multiplication by a complex constant of unit absolute value, in other words,
 for any real $c$ the vector  $e^{i c}\bm v_\mu$ also satisfies our normalisation assumption.
 We assume that this freedom is eliminated in the same way as in the linear part of
 the normal form theory near the saddle-centre. In particular,
 $v_\mu$ is a smooth function of $\mu^{1/2}$ (including the limit $\mu\to+0$).

Now we are ready to define fundamental solutions of the variational equation.
One solution is selected by the assumption
$$
\bm\xi_2(t)=e^{i\omega_\mu t}\bm v_\mu+O(e^{\lambda_\mu t})
$$
for $t\to-\infty$. 
The other one  is defined using the real symmetry:
$$
\bm\xi_1(t)=\overline{{\bm\xi}_2(\bar{t})}\,.
$$
These two solutions are not real on the real axis and $\Omega(\bm \xi_1,\bm \xi_2)=2i$.
Sometimes it is useful to consider their linear combinations
$$
\tilde{\bm \xi}_1(t)=\frac{\bm  \xi_1(t)+ \bm \xi_2(t)}{2}
\qquad\mbox{and}\qquad
\tilde{\bm  \xi}_2(t)=\frac{\bm \xi_1(t)-\bm \xi_2(t)}{2i}
 $$
 which are  real-analytic.

The third solution is given by
$$
\bm\xi_3(t)=\dot{\bm x}^-_\mu(t)
\,.
$$
The last solution is chosen to satisfy the following normalisation conditions:
$$
\Omega(\bm\xi_{1},\bm\xi_4)=\Omega(\bm\xi_{2},\bm\xi_4)=0,\qquad \Omega(\bm\xi_3,\bm\xi_4)=1
.
$$
We note that since the original system is Hamiltonian,  for any two functions  $\bm \xi$ and $\tilde {\bm \xi}$
which satisfy the variational equation,  $\Omega(\bm \xi(t),\tilde {\bm \xi}(t))$
is independent of $t$.
Consequently the vectors $\tilde{\bm \xi}_1(t)$, $\tilde{\bm \xi}_2(t)$, $\bm \xi_3(t)$, $\bm \xi_4(t)$ form
a standard symplectic basis  for every~$t$:
\begin{equation}\label{Eq:normsymp}
\Omega(\tilde{\bm \xi}_1,\tilde{\bm \xi}_2)=
\Omega(\bm \xi_3,\bm \xi_4)=1,\qquad
\Omega(\bm \xi_1,\bm \xi_3)=\Omega(\bm \xi_2,\bm \xi_3)=\Omega(\bm \xi_1,\bm \xi_4)=\Omega(\bm \xi_2,\bm \xi_4)=0.
\end{equation}
Then we can write the splitting vector in this basis:
$$
\bm\delta_\mu(t)=\tilde\theta_1(t)\tilde{\bm\xi}_1(t)
+\tilde\theta_2(t)\tilde{\bm\xi}_2(t)+
\theta_3(t)\bm\xi_3(t)+
\theta_4(t)\bm\xi_4(t).
$$
The normalisation condition (\ref{Eq:normsymp}) implies that
$$
\tilde\theta_1=\Omega(\bm\delta_\mu,\tilde{\bm\xi}_2),\quad
\tilde\theta_2=-\Omega(\bm\delta_\mu,\tilde{\bm\xi}_1),\quad
\theta_3=\Omega(\bm\delta_\mu,\bm\xi_4),\quad
\theta_4=-\Omega(\bm\delta_\mu,\bm\xi_3).
$$
For the future use we also define
$$
\theta_1=\Omega(\bm\delta_\mu,\bm\xi_2),\quad
\theta_2=\Omega(\bm\delta_\mu,\bm\xi_1),
$$
which involve the non-real solutions of the variational equation.
The real symmetry implies that $\theta_2(t)=\overline{\theta_1(\overline{t})}$.

We note that in general the coefficients $\theta_k$ depend on time.
Indeed, the equation (\ref{Eq:mainHamEq}) implies that
$$
\dot{\bm\delta}_\mu=\mathrm{J}(H'(\bm x^-_\mu+\bm\delta_\mu)-H'(\bm x^-_\mu))=\left.\mathrm{J}H''\right|_{\bm x^-_\mu}\bm\delta_\mu
+{\bm F}_2(\bm x^-_\mu,\bm \delta_\mu)
$$
where $\bm F_2(\bm x^-_\mu,\bm \delta_\mu)=O(\|\bm \delta_\mu\|^2)$  is a remainder of a Taylor series.
Then differentiating the definition of $\theta_k$ with respect to $t$
and taking into account that $\left.\mathrm{J}H''\right|_{\bm x^-_\mu}$ is a symplectic matrix
we get
$$
\dot\theta_k=\pm\Omega(\bm F_2,\bm \xi_{k'(k)})=O(\|\bm \delta_\mu\|^2)\,,
$$
where $k'(k)$ is the index of the canonically conjugate variable (e.g. $k'(1)=2$ and $k'(2)=1$).
So $\theta_k$ are $\|\bm\delta_\mu\|^2$-close to  being constant. Moreover,  $\theta_3,\theta_4$ are much smaller than $\theta_1$
and $\theta_2$.
Indeed, taking into account the definition of $\bm\xi_3$ we get
$$
\theta_4=-\Omega(\bm\delta_\mu,\bm\xi_3)=-\Omega(\bm\delta_\mu,\dot{\bm x}^-_\mu)=d_{\bm x^-_\mu} H_\mu(\bm\delta_\mu),
$$
where $d_{\bm x^-_\mu} H_\mu$ is the differential of $H_\mu$ at the point ${\bm x^-_\mu}$.
Since $H(\bm x^-_\mu+\bm\delta)=H(\bm x^+_\mu)=H(\bm x^-_\mu)$, we conclude that $\theta_4=O(\|\bm\delta_\mu\|^2)$.

Initial condition for $\bm x_\mu^\pm(0)$ can be chosen in such a way that $\Omega(\bm\delta_\mu(0),\bm\xi_3(0))=0$
(by translating time in the stable solution in order to achieve the zero
projection of $\bm\delta_\mu(0)$ on the direction of the Hamiltonian vector field 
at $\bm x^-(0)$ represented by $\bm\xi_3(0)$).
Thus $\theta_3=O(\|\bm\delta\|^2)$.


Taking into account the real symmetry we see that
the problem of the separatrix splitting is reduced to the study of a single complex constant $\theta_1(0)$
(via the equation (\ref{Eq:EellipGen})).

\section{Formal expansions\label{Se:formal}}

The proof of the main theorem requires construction of accurate approximations for the
stable and unstable separatrix solutions of the Hamiltonian system (\ref{Eq:mainHamEq})
as well as the fundamental solutions of the variational equation (\ref{Eq:mainvareq}).
Taking into account that the Hamiltonian (\ref{Eq:mainH}) can be formally transformed
to the integrable normal form (\ref{Eq:HamNF}), we construct an approximation by
finding a formal solution to the systems defined by the normal form Hamiltonian.
Of course, the series of the normal form theory diverge in general, but
they can be shown to provide asymptotic expansions for the true solutions
restricted to properly chosen domains on the complex plane of the time variable $t$.

\subsection{Formal separatrix}
In this section we find a formal separatrix  for  the normal form Hamiltonian
\begin{equation}\label{Eq:Hnf}
\hat H_\mu=\frac{y_1^2}2+\hat V_\mu(x_1,I)
\end{equation}
where  $\hat V_\mu$ is a formal series in three variables $x_1,I$ and $\mu$ with
the lower order terms given by (\ref{Eq:potential}).
The corresponding Hamiltonian system has the form
\begin{equation}\label{Eq:formalHamEq}
\begin{array}{ll}
\dot x_1=y_1,\quad
&\dot x_2=\partial_2\hat V_\mu( x_1,I)y_2,\\[8pt]
\dot y_1=-\partial_1\hat V_\mu(x_1,I),\quad
&\dot y_2=-\partial_2\hat V_\mu(x_1,I)x_2.
\end{array}
\end{equation}
Obviously,  the plane $x_2=y_2=0$ is invariant
and we construct  a formal separatrix located on this plane.
Formal expansions can be substantially simplified
with the help of an auxiliary small parameter~$\varepsilon$.
So instead of performing expansion directly in powers of $\mu$,
we look for a solution of the system (\ref{Eq:formalHamEq}) considering
$x_1,$ $y_1$ and $\mu$  as formal power series in $\varepsilon$.
The coefficients of the series for $x_1$ and $y_1$
are assumed to be functions of the  slow time $s=\varepsilon t$.
The following lemma establishes existence and uniqueness of
a formal solution in a specially designed class of formal series.

It is important to note that the leading terms in the series $\hat x_1$ and $\hat y_1$
are of the form $\varepsilon^2 p_1(s)$ and $\varepsilon^3 q_1(s)$ respectively.
This choice makes the expansions of Lemma~\ref{Le:fsep}
compatible with approximations for the separatrix obtained using the standard scaling,
a traditional tool used in the bifurcation theory.

\begin{lemma}\label{Le:fsep}
There are unique real coefficients $p_{k,l}$, $q_{k,l}$ and $\mu_k$
such that the formal series
\begin{equation}\label{Eq:fomsep}
\hat x_1=\sum_{k\ge 1}\varepsilon^{2k}p_k(\varepsilon t),\qquad
\hat y_1=\sum_{k\ge 1}\varepsilon^{2k+1}q_k(\varepsilon t),\qquad
\mu=\sum_{k\ge2}\mu_k\varepsilon^{2k},
\end{equation}
where $p_1(s)$ is not constant and the coefficients $p_k$, $q_k$
with $k\ge1$ have the form
\begin{equation}\label{Eq:pkqk}
p_k(s)=\sum_{l=0}^{k}\frac{p_{k,l}}{\cosh^{2l}\tfrac s2},\qquad
q_k(s)=\sum_{l=1}^{k}\frac{q_{k,l}\sinh\tfrac s2}{\cosh^{2l+1}\tfrac s2},
\end{equation}
 together with $x_2=y_2=0$ satisfy the Hamiltonian equations\/
{\rm  (\ref{Eq:formalHamEq})}.
Moreover, if $\hat H_\mu$ is a formal normal form for the analytic family $H_\mu$ defined by\/ {\rm (\ref{Eq:mainH})},
then the series for $\mu$ are convergent and~$\varepsilon=\lambda_\mu$.
\end{lemma}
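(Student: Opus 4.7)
The plan is to restrict to the invariant plane $x_2=y_2=0$, where the Hamiltonian system (\ref{Eq:formalHamEq}) reduces to the scalar equation $\ddot x_1=-\partial_1\hat V_\mu(x_1,0)$, and to solve it order-by-order in $\varepsilon^2$ in the slow time $s=\varepsilon t$. Substituting the ansatz (\ref{Eq:fomsep}) and matching powers of $\varepsilon$ produces an infinite triangular system which I solve inductively on $k$, determining $p_k$ and $\mu_{k+1}$ from the previously constructed $p_1,\dots,p_{k-1}$ and $\mu_2,\dots,\mu_k$.

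At order $\varepsilon^4$ the equation reads $p_1''=a\mu_2-bp_1^2$; requiring the non-constant solution of the prescribed form $p_1=p_{1,0}+p_{1,1}/\cosh^2(s/2)$ (the separatrix of the saddle of the leading potential) uniquely fixes $\mu_2=1/(4ab)$, $p_{1,0}=-1/(2b)$ and $p_{1,1}=3/(2b)$. At order $\varepsilon^{2k+2}$ with $k\ge 2$ the equation becomes linear in the unknown,
$$
L\,p_k \;=\; a\mu_{k+1}+F_k(s), \qquad L:=\partial_s^2+2bp_1(s),
$$
with $L$ the variational operator along the leading separatrix and $F_k$ a known forcing term built from lower-order data. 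A power-counting argument based on the Taylor expansion of $\partial_1\hat V_\mu(x_1,0)$ shows by induction that $F_k$ is a polynomial in $u:=\mathrm{sech}^2(s/2)$ of degree at most $k+1$, so at each step the problem reduces to a finite-dimensional linear-algebra problem inside the space of polynomials in $u$.

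The main obstacle is the algebraic verification that this finite-dimensional problem has a unique solution. Using that $p_1$ is itself affine in $u$, a direct computation yields
$$
L(u^l) \;=\; (l^2-1)\,u^l \;-\; \tfrac{(2l-3)(l+2)}{2}\,u^{l+1},
$$
exhibiting $L$ as a bidiagonal map from polynomials of degree $\le k$ into polynomials of degree $\le k+1$. Matching the coefficients of $u^0$ and $u^1$ yields two conditions involving only $p_{k,0}$ and $\mu_{k+1}$; because $a\neq 0$, they uniquely determine both. Matching the top coefficient $u^{k+1}$ fixes $p_{k,k}$ since $(2k-3)(k+2)/2\neq 0$ for $k\ge 2$, and the remaining equations for $u^2,\dots,u^k$ can be back-solved for $p_{k,k-1},\dots,p_{k,1}$ using the fact that the off-diagonal entries $(2j-5)(j+1)/2$ are nonzero for all integer $j\ge 2$. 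The prescribed form of $q_k=p_k'$ then follows automatically from $(u^l)'=-l\sinh(s/2)/\cosh^{2l+1}(s/2)$.

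For the final statement on convergence, when $\hat H_\mu$ is the formal normal form of an analytic family the equation $\partial_1 V_\mu(x_1,0)=0$ defines the $x_1$-coordinate of the saddle equilibrium analytically in $\sqrt\mu$ (by the implicit function theorem after a $\sqrt\mu$-rescaling), so $\lambda_\mu^2=-\partial_1^2V_\mu(x_1^{\mathrm{eq}}(\mu),0)$ is analytic in $\sqrt\mu$. Writing $\lambda_\mu=\mu^{1/4}\phi(\sqrt\mu)$ with $\phi$ analytic at $0$ and $\phi(0)\neq 0$, one inverts to obtain $\mu$ as an analytic function of $\varepsilon^2$. Since the formal series produced by the induction is uniquely characterised by the same normalisation---$\varepsilon$ being the hyperbolic eigenvalue at the saddle of the leading ODE---the two must coincide, which proves convergence of $\sum_{k\ge 2}\mu_k\varepsilon^{2k}$ and the identity $\varepsilon=\lambda_\mu$.
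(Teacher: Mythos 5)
Your proposal is correct and follows the same overall strategy as the paper: restrict to the invariant plane, expand in $\varepsilon^2$ in the slow time $s=\varepsilon t$ using the variable $u=\mathrm{sech}^2(s/2)$, determine $p_k$ and $\mu_{k+1}$ inductively from a finite linear system at each order, and identify $\varepsilon$ with $\lambda_\mu$ at the end. The one place you genuinely diverge is the inductive mechanism: the paper first multiplies by $x'$ and integrates once, introducing an extra sequence of constants $C_n$ and reducing each step to a \emph{first-order} equation $2p_{11}p_n'(z^2-z^3)=\cdots$ whose coefficient matrix is triangular, whereas you keep the second-order equation and linearise, working with $L=\partial_s^2+2bp_1$. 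Your formula $L(u^l)=(l^2-1)u^l-\tfrac{(2l-3)(l+2)}{2}u^{l+1}$ checks out (using $(z')^2=z^2-z^3$, $z''=z-\tfrac32 z^2$), and your solvability analysis is actually more explicit than the paper's one-line remark about a triangular matrix: the vanishing of the diagonal entry $l^2-1$ at $l=1$ is precisely what lets the $u^1$ row determine $p_{k,0}$ and the $u^0$ row then determine $\mu_{k+1}$ (using $a\ne0$), after which the remaining bidiagonal block is invertible because the entries $(2j-5)(j+1)/2$ never vanish. Incidentally, your value $\mu_2=1/(4ab)$ is the correct one; the formula $\mu_2=(24v_{03}v_{11})^{-1}$ printed in the paper has a sign/factor slip (it is negative for $a,b>0$, which would be inconsistent with $\mu>0$). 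Your closing convergence argument is at the same level of rigour as the paper's sketch — the paper truncates the normal form and compares Taylor coefficients of the eigenvalue, while you invert $\lambda_\mu=\mu^{1/4}\phi(\sqrt\mu)$ — and both hinge on the identification of the formal parameter $\varepsilon$ with the hyperbolic exponent, which in your route should be stated explicitly via the decay rate $e^{-\varepsilon|t|}$ of $\mathrm{sech}^2(\varepsilon t/2)$ (or by evaluating $-\partial_1^2\hat V_\mu$ at the formal saddle $\sum_k\varepsilon^{2k}p_{k,0}$).
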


\begin{proof}
The restriction of the system (\ref{Eq:formalHamEq})  onto
the invariant plane $x_2=y_2=0$  is equivalent to a single equation of the
second order
$$
\varepsilon^2x''+\partial_x\hat V_\mu(x)=0,
$$
where ${}'$ denotes  differentiation with respect to the variable $s=\varepsilon t$ and
 $\hat V_\mu(x)$ is used to denote the formal series $\hat V_\mu(x,0)$.
 Our assumptions on the lower order terms of the Hamiltonian $\hat H_\mu$ imply that
 $$
\hat V_\mu(x)=\sum_{k,l\ge0} v_{kl}\mu^kx^l
$$
with $v_{00}=v_{01}=v_{02}=0$ and $v_{03}v_{11}\ne0$.
Multiplying the differential equation by $x'$ and integrating once we get
$$
\varepsilon^2(x')^2+2\hat V_\mu(x)+C_\varepsilon=0
$$
where $C_\varepsilon$ is a formal series in $\varepsilon^2$ (the first two terms in the sum are power series in $\varepsilon^2$ by the assumptions
of the lemma, so $C_\varepsilon$ must be in the same class).
This equation has  a unique formal solution of the form
$$
x=\sum_{n\ge1} \varepsilon^{2n}p_n(z),
\qquad
\mu=\sum_{n\ge 2}\mu_n\varepsilon^{2n},
\qquad
C_\varepsilon=\sum_{n\ge6}C_n\varepsilon^{2n},
$$
where $p_n$ is polynomial of order $n$ in $z=\frac1{\cosh^2\frac s2}$.
Indeed, differentiating $x$ with respect to $s$ we get
$$
x'=\sum_{n\ge1}\varepsilon^{2n}p_n'z'
$$
and taking the square
$$
(x')^2=\sum_{n_1,n_2\ge1}\varepsilon^{2n_1+2n_2}p'_{n_1}p'_{n_2}z^2(1-z)
$$
where we used the identity $(z')^2=z^2-z^3$.
Substituting the formal series $x$ and $\mu$ we get
$$
\hat V_\mu(x)=
\sum_{k,l\ge0}v_{kl}
\sum_{\substack{\bm j\ge2,
\, \bm i\ge1}} \varepsilon^{2(j_1+\dots j_k+i_1+\dots+i_l)}\mu_{j_1}\dots\mu_{j_k}p_{i_1}\dots p_{i_l}\,.
$$
After substituting  these expressions into the equation we get
$$
\sum_{n_1,n_2\ge1}\varepsilon^{2n_1+2n_2+2}p'_{n_1}p'_{n_2}(z^2-z^3)+2
\sum_{k,l\ge0}v_{kl}\sum_{\bm j\ge2,\, \bm i\ge1} \varepsilon^{2(j_1+\dots j_k+i_1+\dots+i_l)}\mu_{j_1}\dots\mu_{j_k}p_{i_1}\dots p_{i_l}
+\sum_{k\ge3} \varepsilon^{2k}C_k=0.
$$
This equality is treated in the class of formal series in powers of $\varepsilon^2$.
The leading order is  of order of $\varepsilon^{6}$. Collecting all terms of this order we get
$$
(p'_{1})^2(z^2-z^3)+2v_{03}p_1^3+2v_{11}\mu_2p_1+C_3=0\,.
$$
Looking for $p_1$ in the form
$p_1=p_{10}+p_{11}z$
we get
$$
p_{11}^2(z^2-z^3)+2v_{03}(p_{10}^3+3p_{10}^2p_{11}z+3p_{10}p_{11}^2z^2+p_{11}^3z^3)+2v_{11}\mu_2(p_{10}+p_{11}z)+C_3=0.
$$
This equation is equivalent to the following system for the coefficients:
$$
-p_{11}^2+2v_{03}p_{11}^3=0,
\qquad
p_{11}^2+6v_{03}p_{10}p_{11}^2=0,
\qquad
v_{03}3p_{10}^2p_{11}+2v_{11}\mu_2p_{11}=0,
\qquad
2v_{03}p_{10}^3+C_3=0.
$$
This system has a unique solution with $p_{11}\ne0$, which leads to a non-constant $p_1$: 
$$
p_{11}=(2v_{03})^{-1},\quad p_{10}=-(6 v_{03}) ^{-1},\quad \mu_2=(24v_{03}v_{11})^{-1},\quad
C_3=-2v_{03}p_{10}^3.
$$
Then we continue by induction. Suppose that for some $n\ge2$ all coefficients are defined uniquely up to $p_{n-1}$, $\mu_{n}$
and $C_{n+1}$. Then
collect the terms of order  $\varepsilon^{2n+4}$  to obtain an equation of the form
$$
2p_{11}p_{n}'(z^2-z^3)=3v_{03}p_1^2p_n+v_{11}\mu_2p_n+v_{11}\mu_{n+1}p_1+C_{n+2}+pol_{n+2},
$$
where $pol_{n+2}$ is a polynomial of order $n+2$ in $z$ with coefficients depending on already known ones.
We can find the coefficients of $p_n$ starting from the largest power of $z$. We find
$\mu_{n+1}$ from the linear term in $z$ and $C_{n+2}$ from the constant term.
In the essence we solve a linear algebraic system with a triangle matrix
with non-vanishing elements on the diagonal. So the coefficients are unique.

\smallskip

If $H_\mu$ is an analytic family defined by\/ {\rm (\ref{Eq:mainH})} then there is an analytic coordinate change which
moves the remainder term $R_\mu$ beyond any fixed order $n$. Neglecting this remainder we obtain
a polynomial Hamiltonian of the form (\ref{Eq:HamNF}) and  it is not too difficult to verify that
for this Hamiltonian $\varepsilon=\tilde \lambda_\mu$ where $\tilde \lambda_\mu$ is
an exponent of the saddle-centre equilibrium of the truncated normal form.
Since $\lambda_\mu$ is not changed by smooth coordinate changes,
the Taylor expansions of $\tilde\lambda_\mu$ and $\lambda_\mu$ in powers of $\mu^{1/4}$ coincide in the first $n$ terms
(indeed, our formal computations show that the first $n$-terms of this series are
uniquely determined by the first $n$ orders of the Hamiltonian,  these terms are the same
as the remainder affects terms of higher order only).
\end{proof}

\subsection{Formal variational equation\label{Se:vareqf}}

In addition to the formal separatrix solution we will need to study formal solutions
for the corresponding variational equation. These formal solutions will be used
to approximate analytic solutions of the variational equation with an error being
of a sufficiently high order in $\varepsilon$.

The variational equation near the formal solution provided by Lemma~\ref{Le:fsep} has the form
\begin{equation}\label{Eq:formalvar}
\begin{array}{ll}
\dot x_1=y_1,\quad
&\dot x_2=\partial_2\hat V_\mu(\hat x_1,0)y_2,\\[8pt]
\dot y_1=-\partial_1\hat V_\mu(\hat x_1,0)x_1,\quad
&\dot y_2=-\partial_2\hat V_\mu(\hat x_1,0)x_2.
\end{array}
\end{equation}
This system is split into two independent pairs of  linear equations.
This property allows us to solve this system  explicitly.
Indeed, a direct substitution shows that the function
$$
\hat{\bm\xi}=(0,0,1,i)\exp\left(i\int_0^t\partial_2\hat V_\mu(\hat x_1,0)\right)
$$
satisfies the system (\ref{Eq:formalvar}). In order to give precise meaning to this
expression in the class of formal series we define the first formal solution
of the formal variational equation by
\begin{equation}\label{Eq:xif}
\hat{\bm\xi}_2=(0,0,1,i)\exp i\hat u_\mu
\end{equation}
where the formal series $\hat u_\mu$ is defined by termwise integration
\begin{equation}\label{Eq:uf}
\hat u_\mu:=\int_0^t\partial_2\hat V_\mu(\hat x_1,0)=\omega_\mu t+\sum_{k\ge0}\varepsilon^{2k+1}u_k(\varepsilon t)
\end{equation}
where
\begin{equation}\label{Eq:uk}
u_k(s)=\sum_{l=0}^{k}u_{k,l}\frac{\sinh \tfrac s2}{\cosh^{2l+1}\tfrac s2},
\qquad
\omega_\mu=\sum_{k\ge0}\omega_k\varepsilon^{2k}\,,
\end{equation}
and $u_{k,l}$, $\omega_k$ are  real coefficients.
These coefficients can be computed by substituting the
formal series $\hat x_1$ into $\partial_1 \hat V_\mu(x_1,0)$
and integrating the result  termwise.%
\footnote{%
In order to check that the result of integration has the stated form consider
 $w_k=\int_0^s\frac{ds'}{\cosh^{2k}\frac {s'}2}$. Then $w_1=\tanh \frac s2$ and integrating by parts we get
$$
w_{k}=\frac{2}{2k-1}\left((k-1)w_{k-1}+\frac{\sinh\frac s2}{\cosh^{2k-1}\frac s2}\right)\,.
$$
}

The other solution $\hat{\bm \xi}_1$ is defined using the real symmetry:
$\hat{\bm \xi}_1(t)=\overline{\hat{\bm \xi}_2(\overline{t})}$.

The third solution is obtained by differentiating the formal separatrix with respect to $t$:
$$
\hat{\bm \xi}_3=(\dot{\hat x}_1,\dot{\hat y}_1,0,0)\,.
$$
Finally, the fourth solution is found  from the normalisation assumptions
$$
\Omega(\hat {\bm \xi}_1,\hat {\bm \xi}_4)=\Omega(\hat {\bm \xi}_2,\hat {\bm \xi}_4)=0,
\quad
\Omega(\hat {\bm \xi}_3,\hat {\bm \xi}_4)=1.
$$
We sketch the derivation for its form.
A solution to the  equation
$$
\left|\begin{array}{ll}\hat x_1'&\hat v\\ \hat x_1''&\hat v'\end{array}\right|=1
$$
also satisfies the same variational equation as $\hat x_1'$.
Looking for a formal solution of the form
$$
\hat v=\sum_{k\ge0}\varepsilon^{2k}v_k
$$
with
\begin{equation}\label{Eq:forvare4}
v_0=c_0 sz'+z^{-1}d_2(z)\qquad\mbox{and}\qquad
v_k=c_k(z) sz'+z^{-1}d_{k+2}(z)
\end{equation}
where $d_k$, $c_k$ are polynomial 
in $z=\frac1{\cosh^2\frac s2}$ (of the order indicated by the subscript).
Substituting the series into the equation and collecting  
the leading terms in $\varepsilon$, we get 
$$
z' v_0'-z''v_0=p_{11}^{-1}.
$$
This equation can be solved explicitly:
$$
v_0=\left(
-1/2 z^{-1} + 15/8 ( s z' + 2  z - 2/3 )\right)p^{-1}_{11}\,.
$$
The solution of the equation is defined up to adding a multiple of $z'$,
which satisfies the corresponding homogeneous equation,
but only one of those solutions has the desired form (\ref{Eq:forvare4}).
Thus  our expression for $v_0$ provides the unique solution in 
our class.

Acting in a similar way and collecting terms of  order  $\varepsilon^{2n}$
we get an equation for $v_{n}$ with $n\ge1$:
$$
\sum_{0\le k\le n}\bigl( p_{n-k}'v_k'-p_{n-k}''v_k \bigr)=0\,,
$$
where $p_k(s)$ are defined by equation~(\ref{Eq:pkqk}).
An induction in $n$ can be used to prove that this equation
uniquely determines a function $v_n$ of the form (\ref{Eq:forvare4})
provided $v_k$ with $k<n$ already have that form.
The induction step is rather straightforward: we plug (\ref{Eq:forvare4}) into the equation
and take into account that $p_k$ is a polynomial in $z$ of order $k$,
then $c_n'$ is determined by the terms proportional to~$s$. The constant term of $c_n$
is used to satisfy a compatibility condition in a system for coefficients of $d_{n+2}$.
The identities $(z')^2=z^2-z^3$, $z''=z-\frac32 z^2$ are used to simplify equations.

\subsection{Re-expansion near the singularity at {\protect $s=i\pi$}.\label{Se:reexp}}

It is easy to see that all coefficients of the series (\ref{Eq:fomsep}) converge to a constant both for $t\to+\infty$
and for $t\to-\infty$. So any partial sum of the series represents a closed loop on the plane $(x_1,y_1)$.
We say that the series represent a formal (or ghost) separatrix as the series are expected to diverge
generically.
It can be shown that the same series provide an asymptotic expansions both
for the stable and unstable solution of the original system (after reversing the transformation
which brings the original Hamiltonian to the formal normal form).
Consequently, the series (\ref{Eq:fomsep}) do not directly distinguish between the stable and unstable solution.

The coefficients of the series  (\ref{Eq:fomsep}) and (\ref{Eq:uf}) are meromorphic functions with poles
at  $s_k=i\pi+2\pi i k$ for all $k\in\mathbb Z$. The coefficients are $2\pi i$ periodic in the variable $s$,
so the following arguments equally apply to every $s_k$.

Since  $\cosh(\frac s2)$ has a simple zero
at $i\pi$, equations (\ref{Eq:pkqk}) and (\ref{Eq:uf}) imply that $p_k$, $q_k$ and $u_k$
have poles of order $2k+2$, $2k+3$ and $2k+1$ respectively. Since
$\cosh\frac{s+i\pi}{2} =i\sinh\frac{s}{2}$ and $\sinh\frac{s+i\pi}{2} =i\cosh\frac{s}{2}$,
equations (\ref{Eq:pkqk}) and (\ref{Eq:uk})
all coefficients of the Laurent expansions for $p_k$, $q_k$ and $u_k$ are real
and  contain either only even powers of $(s-i\pi)$ for $p_k$
or  only odd powers for $q_k$ and $u_k$.
Substituting the Laurent  expansions into the series
(\ref{Eq:fomsep}) and (\ref{Eq:uf}) we obtain new formal series of the form
$$
\hat X_1=\sum_{k\ge 1,l\ge -k}\varepsilon^{2k}(\varepsilon t-i\pi)^{2l}\tilde p_{k,l},\quad
\hat Y_1=\sum_{k\ge 1,l\ge -k}\varepsilon^{2k+1}(\varepsilon t-i\pi)^{2l-1}\tilde q_{k,l},\quad
\hat X_2=\hat Y_2=0,
$$
and
$$
\hat U=\omega_\mu t+\sum_{k\ge 0,l\ge -k}\varepsilon^{2k+1}(\varepsilon t-i\pi)^{2l-1}\tilde u_{k,l}\,,
$$
where $\tilde p_{k,l}$, $\tilde q_{k,l}$ and $\tilde u_{k,l}$ are real.
Note that these series contain both positive and negative powers of $(\varepsilon t-i\pi)$.
It is convenient to shift the origin of the time variable into the centre of the Laurent expansion
and introduce a new time variable
\begin{equation}\label{Eq:shiftedtime}
\tau=t-i\frac{\pi}{\varepsilon}\,.
\end{equation}
Substituting this time in the series above we get
$$
\hat X_1=\sum_{k\ge 1,l\ge -k}\varepsilon^{2k+2l}\tau^{2l}\tilde p_{k,l},
\qquad
\hat Y_1=\sum_{k\ge 1,l\ge -k}\varepsilon^{2k+2l}\tau^{2l-1}\tilde q_{k,l}
$$
and
$$
\hat U
=i\pi\omega_\mu \varepsilon^{-1}+\omega_\mu \tau+\sum_{k\ge 0,l\ge -k}\varepsilon^{2k+2l}\tau^{2l-1}\tilde u_{k,l}\,.
$$
Collecting together the coefficients which have the same order in $\varepsilon$ we get
\begin{equation}\label{Eq:hatX1Y1}
\hat X_1=\sum_{m\ge0}\varepsilon^{2m}\hat A_m,
\qquad
\hat Y_1=\sum_{m\ge0}\varepsilon^{2m}\hat B_m
\end{equation}
and
\begin{equation}\label{Eq:hatU}
\hat U=i\pi\omega_\mu \varepsilon^{-1}+\sum_{m\ge 0}\varepsilon^{2m}\hat U_m
\end{equation}
where $\hat A_m$, $\hat B_m$ and $\hat U_m$ are formal series defined by
\begin{equation}\label{Eq:AB}
\hat A_m
=\sum_{l\le m-1}\tau^{2l}\tilde p_{m-l,l}
,
\qquad
\hat B_m
=\sum_{l\le m-1}\tau^{2l-1}\tilde q_{m-l,l}
\end{equation}
and
\begin{equation}\label{Eq:hatUm}
\hat U_m
=\omega_k \tau+\sum_{l\le m}\tau^{2l-1}\tilde u_{m-l,l}\,.
\end{equation}
We note that each of these series contains a finite number of terms with positive powers of $\tau$
and an infinite formal series in $\tau^{-1}$.
In the next section we will use this series
in the process known as {\em complex matching}.

The new series (\ref{Eq:hatX1Y1}) satisfy the same formal equations 
(\ref{Eq:formalvar}) as the original series $\hat x_1$, $\hat y_1$, and 
\begin{equation}\label{Eq:eta_1}
\hat{\bm\eta}_2=(0,0,1,i)\exp\left(i\sum_{m\ge0}\varepsilon^{2m}\hat U_m\right)
\end{equation}
formally satisfies the variational equation (\ref{Eq:formalvar}) with $\hat x_1$ replaced by $\hat X_1$.
Note that in the definition of $\hat{\bm \eta}_2$ we skip the first term of (\ref{Eq:hatU}).
Consequently, partial sums of $\hat{\bm\xi}_2$ should be compared with $e^{-\pi\omega_\mu \varepsilon^{-1}}\hat{\bm \eta}_2$.

Finally, the formal series $\hat A_1$ and $\hat B_1$ are trivial,
i.e. all coefficients of these two series vanish.
Indeed, it can be checked that $(\hat A_1,\hat B_1,0,0)$ satisfies a homogeneous
variational equation around $\hat {\bm X}_0=(\hat A_0,\hat B_0,0,0)$.
Arguments of section~\ref{Se:vareq} can be modified to show that
any nontrivial formal solution in the class of formal power series in $\tau^{-1}$
is proportional to $\dot{\hat{\bm X}}_0$. The first component of  $\dot{\hat {\bm X}}_0$ contains odd powers of $\tau^{-1}$ only
while $\hat A_1$ contains  only even orders. Consequently $\hat A_1\equiv0$. Analysis of the second component
shows $\hat B_1\equiv0$.

\section{Approximations for the separatrix solutions and the complex matching\label{Se:approx}}

In section~\ref{Se:formal} we derived various formal solutions to the normal form equation.
Partial sums of the formal solutions (i.e. truncated formal series) satisfy the original analytic equations up to
a small remainder. Then it can be shown that they provide rather accurate approximations
for the corresponding analytic solutions both for the Hamiltonian equation and the variational one.
The formal series are asymptotic, i.e., the difference between
a truncated series and the corresponding analytic solution is of the order of the first skipped term.
A typical series used in this analysis is a power series in $\varepsilon$ with
time-dependent coefficients. The errors of the approximations depend on time, so the
asymptotics are not uniform. Therefore it is  important
to pay attention to domains of validity of the asymptotic expansions
on the complex plane of the time variable.

Consider the system of equations (\ref{Eq:mainHamEq}) associated with the Hamiltonian function (\ref{Eq:mainH})
assuming that the remainder of the normal form is of order $2N+3$ for some integer $N\ge 2$.
Then we can use the first $2N$ terms of the series (\ref{Eq:fomsep}) to approximate the stable and unstable solutions:
\begin{equation}\label{Eq:approx1}
\bm x^\pm_\mu(t)=\left(\sum_{k=1}^{2N}\varepsilon^{2k}p_k(\varepsilon t),\sum_{k=1}^{2N}\varepsilon^{2k+1}q_k(\varepsilon t)
,0,0\right)+\bm r_N^\pm(t,\varepsilon)
\end{equation}
where $\varepsilon=\lambda_\mu$.
It can be shown that $\bm r_N^-(t,\varepsilon)=O(\varepsilon^{4N+1})$ for $t\le0$
and $\bm r_N^+(t,\varepsilon)=O(\varepsilon^{2N+1})$ for $t\ge0$.
This estimates can be extended onto complex values of $t$ such that
the slow time $s=\varepsilon t$  avoids small neighbourhoods
of the singular points  at $s=\pm i\pi$.
A more accurate estimate (similar to one used in \cite{G2000})
shows that the series retain the asymptotic property at least up to
$| t-i\frac{\pi}{\varepsilon}|=\varepsilon^{-1/2}$ (with $\Re t\le 0$ for the unstable solution
and $\Re t\ge0$ for the stable one). For these values of $t$ the error term
becomes notably larger but still  small:
$\bm r_N^\pm(t,\varepsilon)=O\left(\varepsilon^{4N+1}| t-i\frac{\pi}{\varepsilon}|^{4N}\right)=O(\varepsilon^{2N+1})$.
On the other hand, in this region $|s-i\pi|=\varepsilon^{1/2}$ is small
so the functions $p_k$ and $q_k$ can be replaced by partial sums of their Laurent
expansions (\ref{Eq:hatX1Y1}), (\ref{Eq:AB}) described in section \ref{Se:reexp} without increasing the order of the error term.

In order to estimate $\bm x^\pm_\mu(t)$ for values of $t$ closer to the singularity
 we use a different approximation:
\begin{equation}\label{Eq:approx2}
\bm x^\pm_\mu(t)=\sum_{m=0}^{N-1}\varepsilon^{2m}\bm X^\pm_m(\tau)+\bm R_N^\pm(\tau)\,,
\end{equation}
where the time variables $t$ and $\tau$ are related by equation (\ref{Eq:shiftedtime}).
The coefficients $\bm X^\pm_m(\tau)$ are defined via the process known as {\em
complex matching}. The method uses partial sums of the series (\ref{Eq:AB})
as asymptotic condition for $\tau\to\infty$ in the definition of  $\bm X^\pm_m(\tau)$.
The number of retained terms is chosen in such a way that
the comparison of the approximations
(\ref{Eq:approx1}) and (\ref{Eq:approx2})
leads to the estimate $\bm R_N^\pm(\tau)=\bm r_N^\pm(t,\varepsilon)+O(\varepsilon^{2N})=O(\varepsilon^{2N})$
for $|\tau|=\varepsilon^{-1/2}$
(with $\Re \tau\le 0$ for the unstable solution
and $\Re \tau\ge0$ for the stable one).

In order to derive equations for  $\bm X^\pm_m$ we note that
Lemma~\ref{Le:fsep} implies  $\mu=\sum_{k=2}^\infty\mu_k\varepsilon^{2k}$.
Since this  series converges  we can express the Hamiltonian in the form
of a convergent series 
\begin{equation}
\label{Eq:Heps}
H_\mu=\sum_{k=0}^\infty\varepsilon^{2k}H_k.
\end{equation}
Obviously, the leading term of this series coincides with $H_\mu$ with $\mu=0$
and $H_1\equiv0$. Indeed,   $\mu$ is of the order of $\varepsilon^4$ and,
consequently, the series do not include  terms proportional to $\varepsilon^2$.
Plugging   (\ref{Eq:Heps}) and (\ref{Eq:approx2})
into  (\ref{Eq:mainHamEq}) and collecting terms of equal order in $\varepsilon^2$
we obtain a system of equations for the coefficients $\bm X_m^\pm$. For example for $m=0,1$ and $2$ we obtain
\begin{eqnarray}
\label{Eq:ham0}
\dot {\bm X}^\pm_0&=&\mathrm{J}H_0'(\bm X^\pm_0),
\\
\dot {\bm X}^\pm_1&=&\mathrm{J}H_0''(\bm X^\pm_0)\bm X^\pm_1,\\
\dot {\bm X}^\pm_2&=&\mathrm{J}H_0''(\bm X^\pm_0)\bm X^\pm_2+\mathrm{J}H_2'(\bm X^\pm_0).
\end{eqnarray}
The equation for $\bm X^\pm_0$ is non-linear. The equation for $\bm X^\pm_1$ is linear
and homogeneous. The equations for $\bm X^\pm_m$ with $m\ge2$ are all linear and non-homogeneous.

Let $ \bm X_m^{(N)}(\tau)$ be the sum of the first $N$ terms of the series
 $(\hat A_m,\hat B_m,0,0)$ where $\hat A_m$, $\hat B_m$ are defined in (\ref{Eq:AB}).
 Since the leading orders of the Hamiltonian $H_\mu$ coincide with the normal form,
 $ \bm X_m^{(N)}(\tau)$ approximately satisfy the corresponding equation up to a
 remainder determined by the order of the first skipped term.
It can be shown that these equations have unique analytic solutions such that $\bm X_m^\pm(\tau)\sim \bm X_m^{(N)}(\tau)$
as $| \tau|\to\infty$ in a sector $D^\pm$ respectively.
In other words, either $\tau$ (for the unstable solution) or $-\tau$ (for the stable one)
is in
$$D^-=\left\{\tau\in\mathbb C: |\arg(\tau)|>\frac\pi4\right\}.$$
We also define $D^+=-D^-$.
The asymptotic assumption implies $\bm X_1\equiv0$ as the formal series $\hat A_1=\hat B_1=0$.

\medskip

It can be shown that the upper bound on the remainders in (\ref{Eq:approx2}),
$$
\bm R_N^\pm(\tau)=O(\varepsilon^{2N})
$$
can be extended to a neighbourhood of the segment of the imaginary axis defined by
$\Re\tau=0$, $-\varepsilon^{-1/2}\le\Im\tau\le -c$,
where $c$ is a  positive constant.

\medskip

We also need an approximation for the function $\bm \xi_2$ which is defined
in section~\ref{Se:vareq} as a solution of the variational equation
 (\ref{Eq:mainvareq}) around
the unstable solution $\bm x^-_\mu$. It is convenient to construct
the approximation for the function $\bm\eta_2$ defined by
the equality
\begin{equation}\label{Eq:trans}
\bm \xi_2=e^{-\pi\omega_\mu\varepsilon^{-1}}\bm \eta_2^-
\end{equation}
where the pre-factor is related to the constant term in (\ref{Eq:hatU}).
Since the variational equation is linear, the function $\bm \eta_2$ also  satisfies
 (\ref{Eq:mainvareq}). Following the procedure of complex matching
 we look for a representation of $\bm\eta_2$ in the form
\begin{equation}\label{Eq:eta}
{\bm \eta}_2^-=\sum_{k=0}^{N-1}\varepsilon^{2k}\bm \eta_k(\tau)+\bm \rho_N(\tau,\varepsilon),
\end{equation}
where the coefficients satisfy
\begin{eqnarray}
\label{Eq:eta0eq}
\dot{\bm \eta}_0&=&\mathrm{J}H_0''(X^-_0)\bm\eta_0,\\
\dot{\bm \eta}_1&=&\mathrm{J}H_0''(X^-_0)\bm\eta_1,\\
\dot{\bm \eta}_2&=&\mathrm{J}H_0''(X^-_0)\bm\eta_2+\mathrm{J}H_1''(\bm X^-_0)\bm \eta_0
\\
&&\dots\nonumber
\end{eqnarray}
The solutions to these equations are also chosen to satisfy
 asymptotic conditions obtained by complex matching,
 based on  comparison with the formal solution (\ref {Eq:eta_1}).
In particular, the asymptotic condition for $\bm\eta_0$ has the form
\begin{equation}\label{Eq:eta0}
\bm\eta_0(\tau)\sim(0,0,1,i)e^{i\hat U_0^{(N)}(\tau)}
\end{equation}
as $\tau\to\infty$ in $D^-$. In this equation  $\hat U_0^{(N)}$
denotes the sum of the first $N$ terms of the series (\ref{Eq:hatUm})
with $m=0$.

In order to complete the proof, one should demonstrate existence of the coefficients and
upper bounds for the remainder terms. The corresponding arguments are similar
in all statements of this section: the differential equation is rewritten as an integral
equation for a reminder term. Then a contraction mapping arguments
are used to  bound the reminder in a properly chosen Banach space.

\section{Stokes constants}
Let $\bm X_0^-$ be the unstable solution of the Hamiltonian equation (\ref{Eq:ham0}).
This solutions converges to zero as $\tau\to\infty$ in the sector $D^-$ and consequently
represents the unstable manifold associated with the non-hyperbolic equilibrium
of the Hamiltonian
$$
H_0=\frac{y_1^2}2+V_0(x_1,I)+R_0(x_1,y_1,x_2,y_2).
$$
The stable solution $\bm X_0^+$ represents
the stable manifold of the same equilibrium. It satisfies the same Hamiltonian equation
and has the same asymptotic expansion as $\bm X_0^-$ but in a different
domain, namely in $D^+=-D^-$ (i.e., $\tau\in D^+$ iff $-\tau\in D^-$).
It follows immediately that
$$
\bm\delta_0(\tau)=\bm X_0^+(\tau)-\bm X_0^-(\tau)
$$
converges to zero faster than any power of $\tau^{-1}$ when $\tau$ converges to infinity in $D^-\cap D^+$.
In order to describe the difference between the stable and unstable solution we define a
Stokes constant
\begin{equation}\label{Eq:a0}
b_0=
\lim_{\Im\tau\to-\infty}
\Omega(\bm \delta_0(\tau),\bm\eta_0(\tau))
\end{equation}
where $\bm\eta_0$ is an analytic solution of the variational equation (\ref{Eq:eta0eq})
which satisfies the asymptotic condition (\ref{Eq:eta0}) at infinity in the sector $D^-$. 
Since $\bm\delta_0$ is a difference of two solutions of the same equation and 
small, it satisfies the variational equation with a error proportional to $|\bm\delta_0|^2$.
If $\bm\delta_0$ satisfied the variational equation exactly, the symplectic form in (\ref{Eq:a0})
would be constant. 
Note that while $\bm \delta_0(\tau)$ exponentially decays along the negative imaginary
semi-axis, the function $\bm\eta_0(\tau)$ grows exponentially,
these two tendencies compensate each other and,
using the exponential decay of $\bm\delta_0(\tau)$,
it can be shown that the 
convergence to the limit in (\ref{Eq:a0}) is exponentially fast.

More generally, we define
\begin{equation}\label{Eq:bn}
b_n=
\lim_{\Im\tau\to-\infty}
\sum_{m=0}^n\Omega(\bm\delta_m(\tau),\bm\eta_{n-m}(\tau))
\end{equation}
where
\begin{equation}\label{Eq:deltak}
\bm\delta_m(\tau)=\bm X_m^+(\tau)-\bm X_m^-(\tau),
\end{equation}
$\bm X^\pm_k$ and $\bm\eta_k$ are defined in Section~\ref{Se:approx}.
It can be shown that the expression under the limit converges to the limit exponentially:
\begin{equation}\label{Eq:limits}
\left|b_n-\sum_{m=0}^n\Omega(\bm\delta_m(\tau),\bm\eta_{n-m}(\tau))\right|\le \mathrm{const}_n |\tau|^{k(n)}e^{\omega_0\Im \tau}
\end{equation}
when $\Im\tau\to-\infty$.
We do not know explicit expressions for the limits in (\ref{Eq:a0}) and (\ref{Eq:bn}).
For an explicitly given Hamiltonian function $H_0$ the value of $b_0$
can be found numerically relatively easy with the help of the methods
described in \cite{GL01}. The method relies on computation
of formal solutions both for the Hamiltonian and the corresponding
variational equations. The finite parts of the formal series are used
to obtain an accurate approximation for $\bm X_0^\pm(\tau)$ for large $\Re\tau$.
Then values of $\bm X_0^\pm(\tau)$ on the imaginary axis are obtained by
numerical integration which keeps $\Im\tau=\mathrm{const}$.
Then $\bm\delta_0$ is evaluated and substituted into (\ref{Eq:a0}).
The function $\bm\eta_0$ can be approximated on the basis of (\ref{Eq:eta0}).
We note that this computation can be performed
in the original coordinates. In this case the explicit knowledge
of the transformation to the normal form is not needed.
On the other hand, the normal form theory still plays an important role
as it  is used to chose the correct Ansatz for the formal solutions
which replace the formal series of section~\ref{Se:formal}.

\section{Derivation of the asymptotic formula}
In this section we complete the proof of the main theorem using the estimates
from the previous two sections to approximate the function $\theta_1$
defined by equation (\ref{Eq:theta1}). First we note
$$
\theta_1(t)=
\frac1{2i}\Omega(\bm\delta_\mu(t),\bm\xi_2(t))=
\frac{e^{-\pi\omega_\mu\varepsilon^{-1}}}{2i}\Omega(\bm\delta_\mu(t),\bm \eta_2^-(\tau)).
$$
where the exponential factor appears due to the shift of the origin in the time variable
(see equations (\ref{Eq:shiftedtime}) and (\ref{Eq:trans})). This prefactor
permits us to obtain an approximation for the exponentially
small $\theta_1$ from estimates of values
of the symplectic form which have errors of order of $\varepsilon^N$ only.
The estimate (\ref{Eq:approx2}) implies that
$$
\bm\delta_\mu(t)=\bm x_\mu^+(t)-\bm x_\mu^-(t)
=\sum_{n=0}^{N-1}\varepsilon^{2n}\bm\delta_k(\tau)+(\bm R_N^+(\tau,\varepsilon)-\bm R_N^-(\tau,\varepsilon)),
$$
where $\bm\delta_k$ are defined by (\ref{Eq:deltak}).
Substituting this expression and (\ref{Eq:eta}) into the definition of the function $\theta_1$
we obtain that
$$
\theta_1(t)=
\frac{e^{-\pi\omega_\mu\varepsilon^{-1}}}{2i}
\left(\sum_{n=0}^N\varepsilon^{2n}
\sum_{k=0}^n\Omega(\bm\delta_{k}(\tau),\bm\eta_{n-k}(\tau))+Q_N(\varepsilon,\tau)\right)
$$
where $Q_N$ is the collection of all terms not explicitly included into the  sums.
Taking into account the definitions of $b_n$ and the estimates (\ref{Eq:limits})
we obtain for $\Im\tau=-\omega_0^{-1}\log\varepsilon^{-N}$
$$
\theta_1(t)=
\frac{e^{-\pi\omega_\mu\varepsilon^{-1}}}{2i}\left(\sum_{n=0}^N\varepsilon^{2n}b_n+O_N\right)\,.
$$
Finally, multiplying by the complex conjugate we get
$$
|\theta_1|^2=
\frac{e^{-2\pi\omega_\mu\varepsilon^{-1}}}{4}
\left(\sum_{n=0}^N\varepsilon^{2n}\sum_{k=0}^nb_k\bar b_{n-k}+O_N\right).
$$
Setting
\begin{equation}\label{Eq:an}
a_n=\frac12\sum_{k=0}^nb_k\bar b_{n-k}
\end{equation}
and using (\ref{Eq:EellipGen}) we obtain the desired estimate (\ref{Eq:mainasymp}).


\section{Comparison with the Melnikov method\label{Se:melnikov}}

Melnikov method is a useful tool in studying the splitting of separatrices.
In the situation when the separatrix splitting is exponentially small compared
to a natural parameter of the problem Melnikov method generically fails.
Although there are classes of system where the usage of the Melnikov method
can be justified (see e.g. \cite{GL01,BaldomaSeara}).

\subsection{Melnikov method and the splitting of the separatrix loop}

In parallel with the  Hamiltonian defined by (\ref{Eq:mainH})
we can consider a two-parameter analytic family
$$
H_{\mu,\nu}=\frac{y_1}2+V_\mu(x_1,I)+\nu R_\mu(x_1,y_1,x_2,y_2).
$$
This family depends on an additional parameter $\nu$
and interpolates between $H_\mu$ and an integrable normal form.
Moreover, for every fixed $\nu=\nu_0$, the one-parameter family 
$H_{\mu,\nu_0}$ satisfies the assumptions of this paper.
It is easy to see that the Hamiltonian $H_{\mu,0}$ is integrable
and has a separatrix loop on the plane $I=0$ for every $\mu\in(0,\mu_0)$.
The Melnikov method can be used to study the splitting of this loop
for small $\nu$.

It is convenient to translate the saddle-centre equilibrium $\bm p_{\mu,\nu}$ into the origin.
Assuming this step is already performed, we noticeably simplify the discussion
of convergence of integrals used later in this section.
It is convenient to define the complex variable $z_2=x_2+i y_2$.
Then the Hamiltonian equations imply that
$$
\dot z_2=-i \partial_2 V_{\mu,\nu}(x_1,I)z_2+\nu(\partial_{y_2}R_\mu-i\partial_{x_2}R_\mu)\,.
$$
A vanishing at $-\infty$ (resp. $+\infty$) solution of this equation must satisfy
the following integral equation
$$
z_2^\pm(t)=\int_{\pm\infty}^te^{-i \int_s^t\partial_2 V_{\mu,\nu}(x_1^\pm,0)}
\left(-i (\partial_2 V_{\mu,\nu}(x_1^\pm,I)-\partial_2 V_{\mu,\nu}(x_1^\pm,0))z_2^\pm
+\nu(\partial_{y_2}R_\mu-i\partial_{x_2}R_\mu)\right)ds
$$
where the integral is taken along the stable (reps. unstable) trajectory $\bm x_{\mu,\nu}^\pm=(x_1^\pm,y_1^\pm,x_2^\pm,y_2^\pm)$.
Note that no approximation is involved at this stage.

For $\nu=0$ the separatrix loop is located on the invariant plane $x_2=y_2=0$ and we have $z_2^\pm\equiv0$.
The smooth dependence of the solutions on $\nu$
implies
$|z_2^\pm(t)|=O(\nu)$ and $I(x_2^\pm,y_2^\pm)=|z_2^\pm|^2/2=O(\nu^2)$
for the values of $t$ used in our integrals.
So we can bound the integral of the first term by $O(\nu^2)$.
Then we use $x^\pm_{\mu,\nu}=x^0_\mu+O(\nu)$ to get
$$
z_2^\pm(0)=\nu\int_{\pm\infty}^0
e^{i \int_0^s\partial_2 V_{\mu,\nu}(x^0_\mu,0)}(\partial_{y_2}R_\mu-i\partial_{x_2}R_\mu)ds
+O(\nu^2),
$$
where the integral is taken along the separatrix solution
$x_\mu^0$  of the
integrable Hamiltonian system defined by
$$
H_{\mu,0}=\frac{y_1^2}2+V_{\mu}(x_1,I).
$$
We define a Melnikov integral:
$$
M_\mu=i
\int_{-\infty}^\infty \left.
e^{i\int_0^s \partial_2 V_{\mu}(x^0_\mu,0)}
\left(
\frac{\partial R_\mu}{\partial x_2}
-
i\frac{\partial R_\mu}{\partial y_2}
\right)\right|_{(x^0_\mu(\varepsilon t),y^0_\mu(\varepsilon t),0,0)}dt\,.
$$
Then we get
$$
z_2^+(0)-z_2^-(0)=\nu M_\mu+O(\nu^2).
$$
The Melnikov integral $M_\mu$ depends on $\mu$. As it is a rapidly oscillating integral
of an analytic function, it is exponentially small compared to $\mu$.

\medskip

In particular, we  can start the procedure for a Hamiltonian
which is transformed to the normal form up to the third order only,
so
$$
V_\mu(x,I)=\omega_0I-\mu x+\frac{ x^3}3.
$$
Note that to avoid unessential constant in the estimates we normalised the constants so $a=b=1$ in (\ref{Eq:potential}),
which can be achieved without restricting the generality. Then $x^0_\mu(s)=(\varepsilon^2 z(s),\varepsilon^3z'(s),0,0)$.
The function $z(s)$ has second order poles at $s_k^*=i\pi+2\pi i k$, $k\in\mathbb Z$.
If additionally $R_\mu$ is polynomial in $x$ (this assumption restricts the class of Hamiltonian systems we consider),
then the Melnikov integral is computed explicitly using residues:
$$
M_\mu=
\frac{-2\pi \varepsilon^{-1}}{e^{\pi\omega_\mu\varepsilon^{-1}}+e^{-\pi\omega_\mu\varepsilon^{-1}}}
{\mathrm{Res}}
\left(
e^{-i \omega_\mu\varepsilon^{-1}s}
\left.\left(
\frac{\partial R_\mu}{\partial x_2}
-
i\frac{\partial R_\mu}{\partial y_2}
\right)\right|_{(x^0_\mu(s+i\pi),y^0_\mu(s+i\pi),0,0)},s=0
\right)
$$
For example, let $R_\mu=c_0x_1^m x_2$ and $x^0_\mu=\varepsilon^2 z(s)$, then
$$
M_\mu=\frac{2\pi i\varepsilon^{-1}}{e^{\pi\omega_\mu\varepsilon^{-1}}+e^{-\pi\omega_\mu\varepsilon^{-1}}}
{\mathrm{Res}}\left(
e^{-i \omega_\mu\varepsilon^{-1}s} \varepsilon^{2m}c_0z^m(s+i\pi)
,s=0
\right)\
=
\frac{2\pi (-1)^{m-1}c_0}{e^{\pi\omega_\mu\varepsilon^{-1}}+e^{-\pi\omega_\mu\varepsilon^{-1}}}
\frac{\omega_\mu^{2m-1}2^{2m}}{(2m-1)!}.
$$
The Melnikov methods provides an estimate which is in a good agreement with our main theorem, but
it does not show that the error term is negligible in the case when $\nu>0$ is fixed and the bifurcation parameter
$\mu$ approaches zero.

\subsection{Melnikov method and  Stokes constants}

Melnikov method can be applied to find a derivative of the Stokes constant.
Indeed, consider an one-parameter family of the form
$$
H_{0,\nu}=\frac{y_1^2}2+V_0(x,I)+\nu R_0(x_1,y_1,x_2,y_2)
$$
where the remainder $R_0$ has a Taylor expansion without terms of the order 3 or less
and
\begin{equation}\label{Eq:potential0}
V_0(x_1,I)=\omega_0I+b\frac{x_1^3}{3}+c x_1 I+O_4.
\end{equation}
Without loosing in generality and for greater convenience we
assume $\omega_0,b>0$. For every fixed $\nu$ we can define
the Stockes constant $b_0(\nu)$ using equation (\ref{Eq:a0}).
The Melnikov method can be used to evaluate $b'(0)$.

As an example consider the Hamiltonian
$$
H_{0,\nu}=\frac{y_1^2}2+\omega_0 I-x_1^3+\nu R_0(x_1,y_1,x_2,y_2).
$$
For $\nu=0$ this Hamiltonian is integrable and its ``separatrix" solution is given
explicitly by
$$\bm
x_0=(2 t^{-2},-4t^{-3},0,0).
$$
Introduce coordinates to diagonalise the linear part by
$$
z_2=\frac{x_2+iy_2}{\sqrt2},\qquad \bar z_2=\frac{x_2-iy_2}{\sqrt2}.
$$
Note that $\bar z_2$ is a complex conjugate of $z_2$  for real $x,y$ only.
Nevertheless, since the Hamiltonian $H_{0,\nu}$ is real-analytic we can
use its real-symmetry to compare $z_2$ and $\bar z_2$ components of the
solutions.
The Hamiltonian equations imply that
$$
\dot z_2=-i\omega_0z_2-i\nu\partial_{\bar z_2}R_0
$$
and we get
$$
z_2^\pm(t)=-i\nu\int_{\pm\infty}^t e^{-i\omega_0(t-s)}\partial_{\bar z_2}R_0ds
$$
where the integral is taken over the stable/unstable solution of the Hamiltonian system.
The integrals are taken over horizontal lines in the complex plane of the variable $s$
defined by  $\Im s=\Im t$. Consider
$$
\delta_0(t)=z_2^+(t)-z_2^-(t)=
i\nu\int_t^\infty e^{-i\omega_0(t-s)}\partial_{\bar z_2}R_0^+ds
+
i\nu\int_{-\infty}^t e^{-i\omega_0(t-s)}\partial_{\bar z_2}R_0^-ds
$$
where the superscripts on $\partial_{\bar z_2}R_0^\pm$ are used to indicate that the argument
of $\partial_{\bar z_2}R_0$ is replaced by the stable (resp. unstable) solution
and let
$$
b_0(\nu)=\lim_{\Im t\to-\infty}e^{i\omega_0t}\delta_0(t).
$$
Assuming that the limit is uniform in $\nu$ and that we can swap integration and differentiation,
and using the fact that both stable and unstable solutions converge to $\bm x_0$ 
in the limit $\nu\to0$,
we differentiate the right hand side of the definition of $b_0$ with respect to $\nu$ at $\nu=0$
and get
$$
b_0'(0)
=\lim_{\Im t\to-\infty}
\int_{i\Im t-\infty}^{i\Im t+\infty} e^{i\omega_0s}\partial_{\bar z_2}R_0(2s^{-2},-4s^{-3},0,0)ds.
$$
We have obtained an expression for $b_0'(0)$ in terms of a Melnikov integral.
If $R_0$ is polynomial,
the integral is independent of $t$ for $\Im t<0$
and can be easily evaluated explicitly using the residue at $0$.

Since $b_0$ depends analytically on parameters (to be proved), then the equation above
shows that $ b_0'(0)$ does not vanish generically for polynomial $R_0$.
We can interpolate between any $R_0$ and a polynomial one. In an analytic one-parameter family
zeroes are isolated. Consequently, on an open dense set $b_0\ne0$.

We note that completing a proof for the claims of this section requires substantial amount of
additional work  to study  dependence of solutions on additional parameters
and establishing uniform convergence which lies beyond the narrow aim of this paper.

\section*{Acknowledgements}
VG's research was supported by EPRC (grant EP/J003948/1) and by the Leverhulme Trust research project.
LL was supported by RFBR (grant 14-01-00344).
A part of this project was supported by the Russian Science Foundation (grant 14-41-00044).

The authors thank Lara el Sabbagh for reading a draft of the paper,
and Alain Champneys and Carles Sim\'o for helpful comments.

\end{document}